\def\newaliasedtheorem#1[#2]#3{
  \newaliascnt{#1@alt}{#2}
  \newtheorem{#1}[#1@alt]{#3}
  \expandafter\newcommand\csname #1@altname\endcsname{#3}
}
\theoremstyle{plain}
\newtheorem{theorem}{Theorem}[section]
\newtheorem{theoremint}{Theorem}[]
\theoremstyle{remark}
\theoremstyle{definition}
\newtheorem{OQ}[theoremint]{Open Question}
\theoremstyle{remark}
\numberwithin{equation}{section}
\def\eps{\varepsilon}
\def\tr{\text{tr}}
\def\R{\mathbb R}
\def\N{{\mathbb N}}
\def\E{\mathds E}
\DeclareMathOperator{\diver}{div}
\DeclareMathOperator{\curl}{curl}
\DeclareMathOperator{\spt}{spt}
\DeclareMathOperator{\diag}{diag}
\DeclareMathOperator{\supp}{supp}
\DeclareMathOperator{\id}{id}
\DeclareMathOperator{\rank}{rank}
\DeclareMathOperator{\out}{out}
\DeclareMathOperator{\stat}{stat}
\DeclareMathOperator{\loc}{loc}
\newcommand{\KK}{K^g_{\stat}}
\DeclareMathOperator{\Lip}{Lip}
\DeclareMathOperator{\cof}{cof}
\DeclareMathOperator{\dist}{dist}
\DeclareMathOperator{\Det}{det}
\DeclareMathOperator{\dv}{div}
\title{Critical points of degenerate polyconvex energies}
\author{Riccardo Tione}
\address{R.T.: Max Planck Institute for Mathematics in the Sciences, Inselstrasse 22, 04103 Leipzig, Germany,}
\email{riccardo.tione@mis.mpg.de}
\begin{document}

\maketitle

\begin{abstract}
We study critical and stationary, i.e. critical with respect to both inner and outer variations, points of polyconvex functionals of the form $f(X) = g(\Det(X))$, for $X \in \R^{2\times 2}$. In particular, we show that critical points $u \in \Lip(\Omega,\R^2)$ with $\det(Du) \neq 0$ a.e. have locally constant determinant except in a relatively closed set of measure zero, and that stationary points have constant determinant almost everywhere. This is deduced from a more general result concerning solutions $u \in \Lip(\Omega,\R^n)$, $\Omega \subset \R^n$ to the \emph{linearized} problem $\curl(\beta Du) = 0$. We also present some generalization of the original result to higher dimensions and assuming further regularity on solutions $u$. Finally, we show that the differential inclusion associated to stationarity with respect to polyconvex energies as above is rigid.
\end{abstract}
\par
\medskip\noindent
\textbf{Keywords:} Critical points, stationary points, polyconvex functionals, constancy theorems.
\par
\medskip\noindent
{\sc MSC (2020): 35B38, 35B65, 35D30, 35G50, 35J70.
\par
}
\section{Introduction}
Consider the \emph{polyconvex} energy defined by
\begin{equation}\label{EN}
\mathds{E}(u) \doteq \int_{\Omega}f(Du)dx, \text{ for }u \in \Lip(\Omega,\R^n), \text{ with } f(X) = g(\det X),
\end{equation}
where $\Omega$ is an open and convex subset of $\R^n$ and $g :\R\to\R$ is a $C^2$ and uniformly convex function. This type of functionals has been extensively considered in the last years, mainly in connection with the theory of gradient flows, see for instance \cite{ESG,CL}. Critical points for (\emph{outer}) variations of the form
\[
\frac{d}{d\eps}|_{\eps = 0}\E(u + \eps v) = 0,
\]
satisfy the Euler-Lagrange equation:
\begin{equation}\label{EL}
\dv(Df(Du)) = \diver(g'(\det(Du))\cof^T(Du)) = 0.
\end{equation}
We refer the reader to Subsection \ref{Not} for the notation used in this paper. Every Lipschitz map with constant determinant solves system \eqref{EL}, which is a consequence of the fact that $\diver(\cof^T(Du)) = 0$ for every Lipschitz $u$, see \cite[Ch. 8, Thm. 2]{EVA}. If we assume $u$ smooth, this is the only kind of solution. To see this, let us assume for simplicity that $\Det(Du) \neq 0$ everywhere, even though this is not necessary. Since $\diver(\cof^T(Du)) = 0$, we rewrite \eqref{EL} as
\[
\cof^T(Du)D(g'(\det(Du))) = 0.
\]
If $Du$ is invertible at every point, one obtains that $g'(\det(Du))$ is constant. If one considers, as we will, strictly convex functions $g$, one further obtains that $\det(Du)$ is constant. This computation holds for $C^2$ solutions, but it is unclear if it might also hold for Lipschitz or even $C^1$ solutions. One of the main results of this paper shows that analogous statements are true for Lipschitz solutions. First of all, let us fix some hypotheses on $g$. We say that $g$ satisfies the set of hypothesis \eqref{HP} if
\begin{equation}\label{HP}\tag{HP}
g \in C^2(\R),\quad g''(t) > 0, \forall t \in \R,\quad\;g(0) = g'(0) = 0.
\end{equation}
Notice that the simplification $g(0) = g'(0) = 0$ can always be made, and thus the only real structural assumption is on the convexity of $g$. We will show that:

\begin{theoremint}\label{thm:poly}
Let $\Omega \subset \R^2$ be open and connected. Let $u \in \Lip(\Omega,\R^2)$ and $g$ satisfy \eqref{HP}. Suppose that \eqref{EL} holds in the weak sense and that
\begin{equation}\label{notzeroint}
\det(Du) \neq 0, \text{ a.e. in }\Omega.
\end{equation}
Then, $\det(Du)$ is \emph{essentially} locally constant, in the sense that there exists an open set $\Omega_0$ with $|\Omega\setminus\Omega_0| = 0$ such that, on $\Omega_0$, $x\mapsto \det(Du)$ is locally constant. If, moreover, there exists $\delta > 0$ such that $\det^2(Du)\ge \delta$, then $\det(Du)$ is constant in $\Omega$.
\end{theoremint}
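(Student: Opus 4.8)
The plan is to reduce the Euler-Lagrange equation to a linearized system of the form $\curl(\beta Du) = 0$ with $\beta = g'(\det Du)$, which is exactly the general setting the introduction advertises. First I would observe that, writing $\beta \doteq g'(\det(Du)) \in \Lip(\Omega)$ (Lipschitz since $g \in C^2$ and $Du \in L^\infty$), equation \eqref{EL} reads $\dv(\beta\, \cof^T(Du)) = 0$, and since $\dv(\cof^T(Du)) = 0$ in the weak sense for every Lipschitz $u$, this is a first-order constraint coupling $\beta$ and $Du$. In dimension $n=2$ one has the algebraic identity $\cof(Du) = J\, Du\, J^{-1}$ (with $J$ the rotation by $\pi/2$), which turns the divergence constraint on $\beta\cof^T(Du)$ into a curl constraint $\curl(\beta Du) = 0$ after composing with $J$; the precise bookkeeping is routine but must be done carefully with the weak formulation. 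I would then invoke the general result for $\curl(\beta Du)=0$ from the body of the paper, whose conclusion should be that $\beta$ — equivalently $\det(Du)$, since $g''>0$ makes $g'$ a homeomorphism — is locally constant on an open set $\Omega_0$ of full measure. This yields the first assertion of the theorem verbatim.

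For the second assertion, the extra hypothesis $\det^2(Du) \ge \delta > 0$ should be used to upgrade ``locally constant on a full-measure open set'' to ``globally constant''. The natural mechanism: on each connected component of $\Omega_0$, $\det(Du)$ takes a constant value $c_i$ with $|c_i| \ge \sqrt\delta$; I want to show all the $c_i$ coincide and that the bad set $\Omega\setminus\Omega_0$ cannot separate them. The lower bound on $|\det(Du)|$ means $Du$ is uniformly invertible, so $u$ is locally bi-Lipschitz (indeed a local homeomorphism with Lipschitz inverse by the Clarke/Rademacher-type inverse function theorem for Lipschitz maps, or via the $n=2$ degree theory), which should force enough continuity of $\det(Du)$ across the null set. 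Concretely, I expect one argues that $\det(Du)$, a priori only in $L^\infty$, actually has a continuous representative when $|\det(Du)| \ge \sqrt\delta$: locally $u$ is invertible with $\det(D(u^{-1})) = 1/\det(Du)\circ u^{-1}$, and the change-of-variables / area formula plus the locally-constant structure pins the value. Once $\det(Du)$ is continuous and locally constant off a closed null set in a connected $\Omega$, it is globally constant.

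The main obstacle I anticipate is precisely this last step: promoting local constancy modulo a measure-zero set to genuine global constancy using only the nondegeneracy $\det^2(Du) \ge \delta$. The subtlety is that a measure-zero relatively closed set can still disconnect things or carry nontrivial behavior of $Du$, so one cannot simply say ``a locally constant function off a null set on a connected domain is constant'' — that is false in general (e.g. the complement of a Cantor-like set of measure zero can be disconnected). The nondegeneracy must be leveraged to show either that $\Omega\setminus\Omega_0$ is in fact empty, or that $u$ has enough regularity (open map, continuous determinant) that the usual connectedness argument applies. I would try the route: show $u$ is an open map (using $\deg(u,\cdot,\cdot) \ne 0$ from the nonvanishing Jacobian), deduce $u(\Omega)$ is connected and $u$ is locally invertible everywhere, transfer the problem to the target where $\det(D(u^{-1}))$ is bounded away from $0$ and $\infty$, and conclude that the locally constant values must agree by a continuity/covering argument. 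If that proves delicate, an alternative is a direct energy-comparison or a maximum-principle-type argument exploiting that $\beta = g'(\det Du)$ is a Lipschitz function satisfying a degenerate-elliptic first-order system with the invertibility of $Du$ making the system nondegenerate.
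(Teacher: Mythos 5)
The first half of your plan is indeed the paper's route (pass to $\curl(\beta Du)=0$ with $\beta=g'(\det Du)$ via $\cof^T(X)J=JX$ and apply the general Theorem \ref{lem:constint}), but you skip the step that constitutes the actual content of the deduction: verifying the hypothesis of that theorem. Theorem \ref{lem:constint} requires $\beta\det(Du)>0$ a.e. (and a uniform bound $\beta\det(Du)\ge\mathrm{const}>0$ for the global statement), and this is exactly where \eqref{HP} enters: since $g'(0)=0$ and $g''>0$, on the compact range $|\det Du|\le L$ one has $g'(t)t=(g'(t)-g'(0))(t-0)\ge\alpha t^2$, so $\det Du\neq0$ a.e. gives strict positivity and $\det^2(Du)\ge\delta$ gives the bound $\alpha\delta$. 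Without this check, ``invoke the general result'' is not yet a proof, and the remark following Theorem \ref{lem:const} shows the positivity hypothesis cannot be dropped. A smaller slip: $\beta=g'(\det Du)$ is only $L^\infty$, not Lipschitz, since $\det Du$ is merely a bounded measurable function of $x$; fortunately $L^\infty$ is all the general theorem needs.

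The second assertion is where your proposal has a genuine gap. Your globalization mechanism rests on the claim that $\det^2(Du)\ge\delta$ forces $u$ to be a local homeomorphism (locally bi-Lipschitz, via Clarke's inverse function theorem or degree theory). That is false: the fold $u(x)=(|x_1|,x_2)$ has $|\det Du|=1$ a.e. but is neither open nor locally injective along $\{x_1=0\}$, and even with a single sign the Lipschitz map $re^{i\theta}\mapsto re^{2i\theta}$ has Jacobian $\equiv 2$ a.e. yet is $2$-to-$1$ near the origin; Clarke's theorem needs invertibility of every element of the generalized Jacobian, which a.e. invertibility of $Du$ (with possibly alternating sign, as $\det^2(Du)\ge\delta$ allows a priori) does not provide. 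The paper's mechanism is different and sidesteps both problems: quasiregular-mapping theory is applied not to $u$ but to the auxiliary map $\varphi$ built from the Poincar\'e primitive $Dv=\beta Du$ (in 2D, $\varphi=(v_1,u_2)$), whose Jacobian equals $\beta\det(Du)\ge\alpha\delta>0$ irrespective of the sign of $\det(Du)$; then Theorem \ref{thm:quas} gives a closed branch set of topological dimension at most $n-2$, off which $\varphi$ is a local homeomorphism, so local constancy of $\beta$ holds off a set that cannot disconnect $\Omega$, whence global constancy. Your fallback suggestions (energy comparison, maximum principle) are not developed enough to assess. So the first assertion is recoverable once the positivity check is added, but the argument you propose for the second assertion would fail as stated.
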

This theorem can be read as an interior regularity result for certain critical points of the energy $\mathds{E}$. One cannot expect better regularity properties of a solution of \eqref{EL} in general, for instance higher differentiability of $u$ or continuity of its partial derivatives, since \eqref{EL} does not carry any further information once we have proved the constancy of $x\mapsto \Det(Du)(x)$. This is due to the lack of a uniformly convex term in the definition of the integrand $f$. For instance, one might study regularity properties of critical points of the energy $\mathds{E}$ associated to the integrand
\begin{equation}\label{polycoer}
f(X) = \frac{|X|^2}{2} + g(\det(X)).
\end{equation}
No general positive regularity results are known for functional of this form, except for the case of minimizers, see \cite{EVA,AFR,KRI}. On the other hand, for more general functionals of the form
\[
f(X) = \frac{|X|^2}{2} + g(X,\det(X)),
\]
it is shown by L. Sz{\'{e}}kelyhidi in \cite{LSP} that no partial regularity result can hold. We also refer the interested reader to \cite{SMVS} for the proof in the case of quasiconvex functionals. These results make the question of partial regularity for critical points of polyconvex fuctionals very delicate. We will also be interested in considering stationary points for the energy $\E$ of \eqref{EN}. We say that $u \in \Lip(\Omega,\R^n)$ is a stationary point (for $\mathds{E}$) if $u$ solves in the weak sense the systems
\begin{equation}\label{vargr}
\dv(Df(Du)) = 0 \;\quad\text{and}\quad \dv(Du^TDf(Du) - f(Du)\id) = 0
\end{equation}
The latter is the Euler-Lagrange equation arising from \emph{inner} (or domain) variations, which are defined as follows. Given a vector field $\Phi\in C^1_c (\Omega, \R^n)$, we let $X_\varepsilon$ be its flow\footnote{Namely $X_\varepsilon(x) = \gamma_x(\varepsilon)$, where $\gamma_x$ is the solution of the ODE
$\gamma'(t) = \Phi(\gamma(t))$ subject to the initial condition $\gamma(0) = x$.}. The one-parameter family of functions $u_\varepsilon = u \circ X_\varepsilon$ will be called an {\em inner variation}. A map $u \in \Lip(\Omega,\R^n)$ is \emph{critical for inner variations} for $\mathds{E}$ if
\[
\left.\frac{d}{d\varepsilon}\right|_{\varepsilon = 0} \mathds{E}(u_\varepsilon) = 0,\qquad \forall \Phi\in C^1_c(\Omega,\R^n)\, ,
\]
which is equivalent to $\dv(Du^TDf(Du) - f(Du)\id) = 0$ in the weak sense. The situation for stationary points to certain polyconvex functionals seems to be more rigid than the one for critical points, see \cite{DLDPKT,TR,JTR, AR,TAH}. This is the case also in the problem considered in this paper. In fact, we have:
\begin{theoremint}\label{cor:poly}
Let $\Omega \subset \R^2$ be open and connected and let $u\in \Lip(\Omega,\R^2)$ be a stationary point of the energy \eqref{EN}. Then, $x \mapsto \det(Du)(x)$ is constant.
\end{theoremint}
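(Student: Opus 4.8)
The plan is to combine the two stationarity equations \eqref{vargr} and then invoke Theorem \ref{thm:poly}. The first equation in \eqref{vargr} is precisely \eqref{EL}, so $u$ is in particular a critical point; the extra rigidity comes from the inner--variation equation. The first step is to simplify its ``energy--momentum'' (Eshelby) tensor. Set $w \doteq \det(Du) \in L^\infty(\Omega)$. Since $f(X) = g(\det X)$, a direct computation using the algebraic identity $X^T\cof(X) = (\det X)\,\Id$ gives, a.e.\ in $\Omega$,
\[
(Du)^T Df(Du) - f(Du)\,\Id \;=\; \big(w\,g'(w) - g(w)\big)\,\Id \;=\; \psi(w)\,\Id,
\]
where we set $\psi(t) \doteq t\,g'(t) - g(t)$. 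Thus the inner--variation equation reads $\dv(\psi(w)\,\Id) = 0$ in the weak sense, i.e.\ $\int_\Omega \psi(w)\,\dv\Phi\,dx = 0$ for every $\Phi \in C^1_c(\Omega,\R^2)$, which is exactly $\nabla(\psi(w)) = 0$ in the sense of distributions. As $\psi(w) \in L^\infty(\Omega)$ and $\Omega$ is connected, this forces $\psi(w) \equiv c$ a.e.\ for some constant $c \in \R$.

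The second step is an elementary analysis of $\psi$. By \eqref{HP} we have $\psi(0) = 0$ and $\psi'(t) = g''(t)\,t$, so $\psi$ is strictly decreasing on $(-\infty,0]$ and strictly increasing on $[0,\infty)$. In particular $\psi \geq 0$, with equality only at $t = 0$, and $\psi$ is injective on each of the two half--lines. Hence the level set $\{\psi = c\}$ is empty if $c < 0$, equals $\{0\}$ if $c = 0$, and consists of at most two points, both nonzero, if $c > 0$.

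It remains to conclude. Since $\psi(w) = c$ a.e.\ and $\psi \geq 0$, we have $c \geq 0$. If $c = 0$, then $w = 0$ a.e., so $\det(Du)$ is constant (equal to $0$). If $c > 0$, then for a.e.\ $x$ the value $w(x)$ lies in a finite set of nonzero reals; thus $\det(Du) \neq 0$ a.e.\ and moreover $\det^2(Du) \geq \delta$ a.e., with $\delta \doteq \min\{t^2 : \psi(t) = c\} > 0$. Since $u$ also solves \eqref{EL} in the weak sense, both hypotheses of Theorem \ref{thm:poly} (including the ``moreover'' part) are satisfied, and the theorem yields that $\det(Du)$ is constant on $\Omega$.

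The heavy lifting is done by Theorem \ref{thm:poly}; the only genuinely new points here are the pointwise algebraic reduction of the Eshelby tensor (a $2\times2$ linear--algebra identity, valid a.e.\ because $u$ is Lipschitz) and the observation that the two--point structure of $\{\psi = c\}$ automatically produces the uniform lower bound $\det^2(Du) \geq \delta$ --- precisely the hypothesis that upgrades ``essentially locally constant'' to genuinely constant in Theorem \ref{thm:poly}. I do not expect either of these to present a real obstacle.
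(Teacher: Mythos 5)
Your proposal is correct and follows essentially the same route as the paper: you reduce the inner-variation (Eshelby) equation to $\psi(\det Du)\equiv c$ with $\psi(t)=t\,g'(t)-g(t)$, analyze the level set $\{\psi=c\}$ via $\psi'(t)=g''(t)\,t$, and then invoke Theorem \ref{thm:poly}, exactly as the paper does, with your explicit $\delta=\min\{t^2:\psi(t)=c\}$ just making more transparent why the ``moreover'' clause (and hence genuine, not only essential local, constancy) applies. The only cosmetic slip is the cited identity, which in the paper's convention should read $X^T\cof^T(X)=\det(X)\Id$ rather than $X^T\cof(X)=\det(X)\Id$; the resulting formula for the Eshelby tensor is nonetheless correct.
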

Theorems \ref{thm:poly}-\ref{cor:poly} will be deduced as corollaries of the following more general result:
\begin{theoremint}\label{lem:constint}
Let $\Omega\subset \R^m$ be open and connected and let $n \ge m\ge 2$. Let $u\in \Lip(\Omega,\R^n)$ and $\beta \in L^\infty(\Omega)$ satisfy
\[
\curl(\beta Du) = 0
\]
in the weak sense. Suppose that there exists an $m \times m$ minor $M$ such that
\[
\beta\Det(M(Du)) > 0, \text{ a.e. in }\Omega.
\]
Then, $\beta$ is \emph{essentially} locally constant, in the sense that there exists an open set $\Omega_0$ with $|\Omega\setminus\Omega_0| = 0$ such that, on $\Omega_0$, $\beta$ is locally constant. Moreover, if there exists $\delta > 0$ such that
\[
\beta\Det(M(Du)) \ge \delta, \text{ a.e. in } \Omega,
\]
 then, $\beta$ is constant. 
\end{theoremint}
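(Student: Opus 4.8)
The plan is to turn the $\curl$-free relation, via a potential, into a statement about a mapping of finite distortion, and then to use the rigidity that the ``conformal factor'' relating two maps with the same conformal structure must be constant. Since the conclusion is local and $\Omega$ is connected, it is enough to show that $\beta$ agrees a.e.\ with a constant near a.e.\ point of $\Omega$; and since only $\beta$ and the rows of $Du$ entering $M$ occur in the hypothesis and the conclusion, I would relabel so that $M$ picks the first $m$ rows, discard the remaining components of $u$, and work on a ball $B\Subset\Omega$. There, $\curl(\beta Du)=0$ and simple connectedness give a potential $w\in\Lip(B,\R^m)$ with $Dw=\beta Du$ a.e.; hence $\det Dw=\beta^m\det Du$, $\cof Dw=\beta^{m-1}\cof Du$, and $\beta\det Du>0$ forces $\beta\neq0$ and $\det Du\neq0$ a.e. After fixing the sign (say $\beta>0$, hence $\det Du>0$, a.e.; the opposite sign is symmetric, and the interface between the two signs goes into the exceptional set), the bound $|Du|^{m}\le\|Du\|_\infty^{m}=\big(\|Du\|_\infty^{m}/\det Du\big)\det Du$ shows $u$ is a Lipschitz \emph{mapping of finite distortion}, and if moreover $\beta\det Du\ge\delta$ then $\det Du$ has a positive lower bound, so $u$ is \emph{quasiregular}.

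\emph{The heuristic, and the engine.} Formally, since $\curl(\nabla u^i)=0$, the equation $\curl(\beta\nabla u^i)=0$ just says that $\nabla\beta\otimes\nabla u^i$ is symmetric, i.e.\ $\nabla\beta\parallel\nabla u^i$ for every $i$; as the $\nabla u^i(x)$ span $\R^m$ (because $\det Du\neq0$ a.e.), this gives $\nabla\beta=0$. Equivalently, in the variables $y=u(x)$ the map $w\circ u^{-1}$ has derivative $(\beta\circ u^{-1})\,\Id$, and the real engine is the elementary fact that a scalar-matrix gradient is constant in dimension $\ge 2$: if $\partial_j H^i=\lambda\,\delta_{ij}$ then $0=\partial_k\partial_jH^i-\partial_j\partial_kH^i=\partial_k\lambda\,\delta_{ij}-\partial_j\lambda\,\delta_{ik}$, so taking $i=j\neq k$ gives $\partial_k\lambda=0$. (When $m=2$ this is the statement ``a holomorphic function with real values is constant'': writing $f=u^1+iu^2$, $g=w^1+iw^2$, the relation $Dw=\beta Du$ reads $\partial_{\bar z}g=\beta\,\partial_{\bar z}f$ and $\partial_z g=\beta\,\partial_z f$, so $f$ and $g$ solve the same Beltrami equation $\partial_{\bar z}h=\mu\,\partial_z h$, $\mu=\partial_{\bar z}f/\partial_z f$, and $\det Du=|\partial_z f|^{2}(1-|\mu|^{2})>0$ forces $|\mu|<1$ a.e.) The problem, of course, is that $u$ need not be locally invertible and $\beta$ is only $L^\infty$, so neither the change of variables nor ``$\nabla\beta$'' makes literal sense.

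\emph{Carrying it out.} For the quantitative statement, $u$ is quasiregular, hence --- by Reshetnyak's theorem (for $m\ge3$), respectively by the planar theory of quasiconformal maps (for $m=2$) --- open and discrete, with a relatively closed branch set $B_u$ of topological dimension $\le m-2$. Off $B_u$, $u$ is locally a bi-Lipschitz homeomorphism onto an open set, so $w\circ u^{-1}$ is a well-defined Lipschitz map there with $D(w\circ u^{-1})=(\beta\circ u^{-1})\,\Id$; the engine above makes $\beta\circ u^{-1}$, hence $\beta$, locally constant on $\Omega\setminus B_u$. A set of topological dimension $\le m-2$ does not disconnect, so $\Omega\setminus B_u$ is connected, and $\beta$ is globally constant. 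For the general statement (only $\beta\det Du>0$), $u$ is merely of finite distortion and may fail to be open or discrete; here I would invoke a Stoilow-type factorization for degenerate Beltrami / finite-distortion mappings, valid off a relatively closed null set --- or, alternatively, argue directly from the null-Lagrangian identities $\dv\big(w^{i}\,\cof(Du)_{i,\cdot}\big)=\beta\det Du$ and $\dv\big(\psi(u)\,\cof(Du)_{i,\cdot}\big)=\partial_i\psi(u)\,\det Du$ together with the area formula for $u$ --- to produce, over a full-measure open set $\Omega_0$, a map with scalar-matrix gradient downstairs, and conclude that $\beta$ is locally constant on $\Omega_0$.

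\emph{Expected main obstacle.} The delicate case is the degenerate one: once $\det Du$ is allowed to approach $0$, the distortion of $u$ is only finite a.e., $u$ may fail to be open or discrete, and one must either develop enough of the Stoilow-type factorization for such maps, or replace it with a purely geometric-measure-theoretic argument; in either case the construction of the exceptional relatively closed set $\Omega\setminus\Omega_0$, and the proof that it has measure zero, is the crux --- and it is precisely what prevents strengthening ``essentially locally constant'' to ``locally constant'' in the absence of the bound $\beta\det(M(Du))\ge\delta$.
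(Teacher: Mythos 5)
Your change of variables is made with $u$ itself, and this is where the argument breaks. The hypothesis is $\beta\det(Du)>0$, not $\det(Du)>0$: on the merely measurable set $\{\beta<0\}$ you have $\det(Du)<0$, and the sets $\{\beta>0\}$, $\{\beta<0\}$ carry no topological structure a priori --- each can meet every ball in a set of positive measure. So ``fixing the sign and putting the interface into the exceptional set'' is not a legitimate reduction: on no open set are you entitled to $\det(Du)>0$ a.e., hence $u$ need not be a map of finite distortion on any open set, and in the quantitative case $\beta\det(Du)\ge\delta$ it need not be quasiregular at all --- while precisely there the conclusion allows no exceptional set, since you must prove $\beta$ constant on all of the connected set $\Omega$. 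The device in the paper that removes this obstruction is to change variables not by $u$ but by the mixed map $\varphi=(v_1,u_2,\dots,u_n)$, where $Dv=\beta Du$ is the potential: then $\det(D\varphi)=\beta\det(Du)>0$ unconditionally, and with $\psi=(u_1,v_2,\dots,v_n)$ one computes $D(\psi\circ\varphi^{-1})=\diag(1/\beta,\beta,\dots,\beta)$, after which your ``curl-free scalar/diagonal gradient is constant'' engine applies verbatim. Without some such sign-corrected map your proof does not start, even in the nondegenerate case.

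The second gap is the degenerate case itself, which you leave open. The tools you name do not cover it: Lipschitz mappings whose distortion is only finite a.e.\ (no exponential integrability) need not be open or discrete, and there is no Stoilow-type factorization at that level of generality; the alternative via null-Lagrangian identities and the area formula is only announced, not carried out --- and it is exactly the construction of the full-measure open set $\Omega_0$, which you yourself call the crux. What the paper uses instead is the a.e.\ local invertibility theorem of Fonseca and Gangbo (recalled as Theorem \ref{INV}), valid for Lipschitz maps with a.e.\ positive Jacobian and no lower bound: applied to $\varphi$, it gives near a.e.\ point a neighborhood $D$, an a.e.\ local inverse $w$ defined on a ball, and a chain rule for $\psi\circ w$, which is precisely enough to run the diagonal computation and to take $\Omega_0$ as the union of these neighborhoods. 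Your treatment of the quantitative case does match the paper's second step (Reshetnyak's branch set of topological dimension $\le n-2$, which does not disconnect $\Omega$), but it must be run for $\varphi$ rather than $u$, and the local inverses are bi-Lipschitz there only after invoking the $W^{1,n}_{\loc}$ regularity of inverses of quasiconformal maps combined with $\det(D\varphi)\ge\delta$ and the Lipschitz bound, as in Theorem \ref{thm:quas}; this is not automatic.
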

The essential ingredients for proving Theorem \ref{lem:constint} are \cite[Theorem 3.1]{FON} and topological properties of quasiregular mappings. These will be recalled in Section \ref{subs:inv}. The results stated above will occupy most of Section \ref{s:exact}. In that Section, we will also show Theorem \ref{thm:poly} in every dimension under a stronger assumption than \eqref{notzeroint}, and we will show the same theorem assuming $u \in \Lip\cap W^{2,1}(\Omega,\R^n)$, using a result of \cite{DETREM}.
\\
\\
Finally, in Section \ref{s:approx}, we will consider \emph{approximate} solutions to \eqref{vargr}. In \cite[Chapter 5]{KMS}, which was also the starting point of our interest in the problem, B. Kirchheim, S. M\"uller and V. \v Sver\'ak study the case $n = 2$ and $g(t) = t^2$, i.e.
\[
f(X) = \det(X)^{2}.
\]
They do so by showing properties of the set
\begin{equation}\label{kms}
K \doteq
\left\{A \in \R^{4\times 2}:
A =
\left(
\begin{array}{c}
X\\
\Det(X)X
\end{array}
\right)
\right\}.
\end{equation}
The set $K$ is related to \eqref{EL} in the following way. First of all, since $\Omega$ is two dimensional, we notice that \eqref{EL} is equivalent to
\[
\curl(g'(\det(Du))\cof^T(Du)J) = 0,
\]
where
\[
J=
\left(
\begin{array}{cc}
0& -1\\
1 & 0
\end{array}
\right).
\]
Moreover, a direct computation shows that for every $X \in \R^{2\times 2}$, $\cof^T(X)J = JX$. Therefore,
\begin{equation}\label{equivalences}
\dv(g'(\det(Du))\cof^T(Du)) \Leftrightarrow \curl(g'(\det(Du))\cof^T(Du)J)=0  \Leftrightarrow \curl(g'(\det(Du))Du) = 0.
\end{equation}
If we work locally and assume without loss of generality that $\Omega$ is convex, we can use Poincar\'e's Lemma to deduce that $u$ satisfies \eqref{EL} if and only if there exists $v \in \Lip(\Omega,\R^2)$ such that
\[
Dv = g'(\det(Du))Du,
\]
or in other words if and only if $w:\Omega\to\R^4$ defined as $w\doteq \left(\begin{array}{c}u\\v\end{array}\right)$ solves
\begin{equation}\label{diffinc}
Dw(x)\in K^g\doteq\left\{A \in \R^{4\times 2}:
A =
\left(
\begin{array}{c}
X\\
g'(\Det(X))X
\end{array}
\right)
\right\}, \text{ for a.e. }x\in \Omega.
\end{equation}
When $g(t) = t^2$, we will simply denote $K^g$ with $K$. Analogously, we may translate system \eqref{vargr} for $f(X) = g(\Det(X))$ into the differential inclusion defined by
\[
K^g_{\stat} = \left\{A \in \R^{6\times 2}:
A =
\left(
\begin{array}{c}
X\\
g'(\Det(X))X\\
(g'(\Det(X))\Det(X) - g(\Det(X)))J
\end{array}
\right)
\right\}.
\]
The fact that equation \eqref{EL} admits the equivalent formulations \eqref{equivalences}-\eqref{diffinc} will be fundamental for the proofs of this paper, and will be used many times. One of the main results of \cite[Chapter 5]{KMS} is the proof that the rank-one convex hull of $K$ is trivial, $K^{rc} = K$. Let us postpone to Subsection \ref{APPSOL} the definition of the hulls and Young measure. They leave as an open question \cite[Question 10]{KMS} the following:
\begin{OQ}\label{open}
Is the quasiconvex hull of $K$ related to $\Det(X)^2$ trivial?
\end{OQ}
In other words, do we have that $K^{qc} = K$? We will not be able to answer this question in this generality, but we will show a result which concerns rigidity for approximate solutions of the differential inclusion associated to stationary points, and we will prove that, for $g$ satisfying \eqref{HP}, we have
\[
(\KK)^{qc} = \KK.
\]
Namely, we will prove the following:
\begin{theoremint}\label{intro:detstro}
Let $\Omega \subset \R^2$ be open, bounded and convex. Let $u_n \in \Lip(\Omega,\R^6)$ be an equi-Lipschitz sequence with 
\[
\dist(Du_n(x),\KK) \to 0 \text{ in } L^1_{\loc}(\Omega).
\]
Suppose moreover that $u_n$ converges weakly-$*$ in $W^{1,\infty}$ to $u \in \Lip(\Omega,\R^6)$. Then, $\Det_{12}(Du_n)$ converges strongly in $L_{\loc}^1$ to $\Det_{12}(Du)$ and $Du \in K$.
\end{theoremint}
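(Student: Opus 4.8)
The plan is to exploit the block structure of $\KK$ together with weak-$*$ continuity of $2\times 2$ Jacobian minors and a single quadratic (Plücker) identity among the six rows of $Du_n$. Write $Y\colon\R^{2\times 2}\to\R^{6\times 2}$ for the map parametrising $\KK$, so that $Y(X)=(X;\,g'(\Det X)X;\,h(\Det X)J)$ with $h(t):=g'(t)t-g(t)$. Since the $u_n$ are equi-Lipschitz, $\|Du_n\|_{L^\infty}\le C$; since $g(0)=g'(0)=0$ we have $0=Y(0)\in\KK$, hence the (measurably chosen) nearest-point projections $Y_n$ of $Du_n$ onto the closed set $\KK$ satisfy $|Y_n|\le 2C$, so $Y_n=Y(X_n)$ with $X_n$ equibounded. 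Setting $E_n:=Du_n-Y_n$ and interpolating $\|E_n\|_{L^1(\omega)}=\int_\omega\dist(Du_n,\KK)\to 0$ against $\|E_n\|_{L^\infty}\le 3C$ gives $E_n\to 0$ in $L^p_{\loc}$ for every $p<\infty$. Let $t_n:=\Det X_n$; up to a subsequence $t_n\rightharpoonup^*\bar t$, $g'(t_n)t_n\rightharpoonup^*\alpha$, $g'(t_n)^2t_n\rightharpoonup^*\gamma$, $h(t_n)\rightharpoonup^*\bar\eta$, $h(t_n)^2\rightharpoonup^*\delta$ in $L^\infty(\Omega)$, and $X_n\rightharpoonup^*\bar X$.

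Because $E_n\to 0$ strongly and the blocks of $Y(X_n)$ are $X_n$, $g'(t_n)X_n$ and $h(t_n)J$, a direct expansion of the relevant minors (using $\Det(X+e)=\Det X+o(1)$ in $L^1_{\loc}$ whenever $e\to 0$ in $L^2_{\loc}$ and $X$ is bounded) gives
\begin{gather*}
\Det_{12}(Du_n)=t_n+o(1),\qquad \Det_{13}(Du_n)=\Det_{24}(Du_n)=o(1),\\
\Det_{14}(Du_n)=-\Det_{23}(Du_n)=g'(t_n)t_n+o(1),\qquad \Det_{34}(Du_n)=g'(t_n)^2t_n+o(1),\qquad \Det_{56}(Du_n)=h(t_n)^2+o(1).
\end{gather*}
By weak-$*$ continuity of $2\times 2$ Jacobian minors along equi-Lipschitz sequences, the left-hand sides converge weak-$*$ to the corresponding minors of $Du$; hence $\Det_{12}(Du)=\bar t=\Det\bar X$, $\Det_{13}(Du)=\Det_{24}(Du)=0$, $\Det_{14}(Du)=-\Det_{23}(Du)=\alpha$, $\Det_{34}(Du)=\gamma$, $\Det_{56}(Du)=\delta$. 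Moreover the third block of $Du$ is the weak-$*$ limit of $h(t_n)J$, i.e. $\bar\eta J$, so $\delta=\Det_{56}(Du)=\Det(\bar\eta J)=\bar\eta^{\,2}$.

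The heart of the proof is the rigidity of $t_n$. From $\delta=\bar\eta^{\,2}$, combining weak convergence with convergence of the $L^2$-norm yields $h(t_n)\to\bar\eta$ strongly in $L^2_{\loc}$, hence a.e. along a subsequence. Since $h'(t)=t\,g''(t)$ with $g''>0$ and $h(0)=0$, $h$ is proper, vanishes only at $0$, and is strictly monotone on each half-line; so every fibre $h^{-1}(c)$ has at most two points, one in $(-\infty,0]$ and one in $[0,\infty)$, and when distinct their product is $<0$. Thus the Young measure $m_x$ generated by $t_n$ satisfies $h_\#m_x=\delta_{\bar\eta(x)}$, i.e. $\supp m_x\subseteq h^{-1}(\bar\eta(x))=\{a(x),b(x)\}$; write $m_x=s\,\delta_a+(1-s)\,\delta_b$. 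The Plücker relation for rows $1,2,3,4$ of $Du(x)$,
\[
\Det_{12}(Du)\,\Det_{34}(Du)-\Det_{13}(Du)\,\Det_{24}(Du)+\Det_{14}(Du)\,\Det_{23}(Du)=0,
\]
reads $\bar t\,\gamma=\alpha^2$; substituting $\bar t=\langle m_x,\mathrm{id}\rangle$, $\alpha=\langle m_x,g'\cdot\mathrm{id}\rangle$, $\gamma=\langle m_x,(g')^2\cdot\mathrm{id}\rangle$ and expanding, it becomes $s(1-s)\,ab\,(g'(a)-g'(b))^2=0$. If $a\neq b$ then $ab<0$ and $g'(a)\neq g'(b)$ by strict convexity of $g$, forcing $s\in\{0,1\}$; hence $m_x$ is a Dirac mass, necessarily $\delta_{\bar t(x)}$, for a.e. $x$. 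Consequently $t_n\to\bar t$ in measure and, being equibounded, in every $L^p_{\loc}$; since $\Det_{12}(Du_n)=t_n+o(1)$ and $\bar t=\Det_{12}(Du)$, this is the claimed strong convergence. Finally, from $t_n\to\bar t$ strongly and $X_n\rightharpoonup^*\bar X$ one gets, by strong$\times$weak products, $Du=\lim Y(X_n)=(\bar X;\,g'(\bar t)\bar X;\,h(\bar t)J)$, and since $\bar t=\Det\bar X$ this equals $Y(\bar X)$, so $Du\in\KK$ a.e.; a routine subsequence argument (with the limit $u$ fixed) upgrades both statements to the full sequence.

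The step I expect to be the genuine obstacle is exactly this determinant-rigidity: a priori $\Det(Du_n)$ could develop oscillations, and it is not evident which relation forbids them. Two ingredients cooperate. First, the energy-momentum block of $\KK$: via weak-$*$ continuity of $\Det_{56}$ it forces $h(\Det(Du_n))$ to converge \emph{strongly}, thereby confining the Young measure of $\Det(Du_n)$ to a two-point fibre of $h$, one point nonpositive and one nonnegative. Second, the Plücker identity among the $2\times 2$ minors of the six-row matrix, which on such a two-point configuration degenerates only in the excluded case $a=b$ (the cross term $ab(g'(a)-g'(b))^2$ being strictly negative otherwise). The remaining points are routine: the measurable selection of $Y_n$, the $L^p$-interpolation of the error, the identification of $\supp m_x$ with a fibre of $h$, and the passage from subsequences to the full sequence.
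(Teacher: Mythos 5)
Your proof is correct, and it reaches the same two pillars as the paper's argument --- weak-$*$ continuity of the $2\times 2$ minors, concentration of the determinant on a two-point fibre of $h(t)=g'(t)t-g(t)$ forced by the $(5,6)$-block, and an algebraic minor identity that excludes a genuine two-point splitting --- but it implements them differently. The paper works with the gradient Young measure $(\nu_x)_x$ generated by $Du_n$ on $\R^{6\times 2}$, observes that each $\nu_x$ is a polyconvex probability measure supported in $\KK$, and proves a standalone proposition about such measures: the \v Sver\'ak-type identity $\int\int \Det_{56}(Y_1-Y_2)\,d\mu(Y_1)\,d\mu(Y_2)=0$ pins $h(\Det_{12})$ to a constant on $\spt\mu$, and then the barycentric relations $\Det_{13}(M)=\Det_{24}(M)=0$, $\Det_{14}(M)=-\Det_{23}(M)$, $\Det_{34}(M)$ are exploited through a row-proportionality argument ($M_3=\lambda M_1$, $M_4=\mu M_2$, $\lambda=\mu$) to arrive at the same cancellation $ab\,(g'(e_1)-g'(e_2))^2=0$. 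You instead project $Du_n$ onto $\KK$, track only the scalar Young measure of $t_n=\Det X_n$, obtain strong convergence of $h(t_n)$ from weak-$*$ continuity of $\Det_{56}$ plus convergence of the $L^2$ norm (morally the same mechanism as the double-integral identity, differently packaged), and replace the proportionality computation by the pointwise Pl\"ucker relation $\Det_{12}\Det_{34}-\Det_{13}\Det_{24}+\Det_{14}\Det_{23}=0$ applied to rows $1$--$4$ of the limit $Du$, which gives $s(1-s)\,ab\,(g'(a)-g'(b))^2=0$ directly and without the case distinction on $I\neq 0$. What the paper's formulation buys is the intermediate statement on polyconvex measures, which immediately yields $(\KK)^{qc}=\KK$ (of independent interest in view of the open question on $K^{qc}$); your version is more elementary and self-contained but does not directly produce that hull statement. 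Two cosmetic remarks: properness of $h$ is neither needed (the $t_n$ are bounded) nor guaranteed without growth assumptions on $g''$, only strict monotonicity of $h$ on each half-line matters; and your conclusion $Du\in\KK$ is the correct one and agrees with the paper's own proof (the ``$Du\in K$'' in the statement should be read as $\KK$).
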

 
 \subsection{Notation}\label{Not}
 
 Throughout the paper, $(a,b)$ denotes the standard scalar product between vectors $a,b \in \R^n$ and $\langle A,B\rangle$ the Hilbert-Schmidt scalar product between matrices of $\R^{n\times m}$. The Euclidean norm of vectors $a \in \R^n$ and matrices $A \in \R^{n\times m}$ is denoted by $|a|$ and $|A|$. If $X \in \R^{n\times n}$, we denote with $\cof^T(X)$ the transpose of the matrix defined as
\[
\cof(X)_{ij} = (-1)^{i + j}\det(M_{ji}(X)),
\]
where $M_{ji}(X)$ denotes the $(n-1)\times(n-1)$ submatrix of $X$ obtained by eliminating from $X$ the $j$-th row and the $i$-th column. In particular, $\cof(X)$ satisfies
\[
X\cof(X) = \cof(X)X = \det(X)\id.
\] 
For a (Lebesgue) measurable set $E$, $|E|$ denotes its Lebesgue measure. For any set $D \subset \R^n$, $\overline{D}$ denotes its closure and $\partial D$ its topological boundary.

\subsection*{Aknowledgements} The author wishes to thank Prof. L\'aszl\'o Sz\'ekelyhidi for introducing him to this problem and Prof. Bernd Kirchheim for valuable discussions.

\section{Local invertibility results for Sobolev maps}\label{subs:inv}

We recall here \cite[Theorem 3.1]{FON}, that will prove to be one of the main ingredients of the proofs of Section \ref{s:exact}. We restate all the results of this section for Lipschitz maps, but both Theorems \ref{INV} and \ref{thm:quas} hold for $W^{1,n}$ mappings.

\begin{theorem}\label{INV}
Let $\Omega \subset \R^n$ be an open set, and let $\varphi: \Omega\to \R^n$ be a Lipschitz map with $\det(D\varphi) > 0$ a.e.. Then, $\varphi$ admits an a.e. local inverse, i.e. for a.e. $x_0 \in \Omega$, there exist $r,R > 0$ depending on $x_0$, an open neighborhood $D$ of $x_0$ and $w\in W^{1,1}(B_r(y_0),D)$, where $y_0 = \varphi(x_0)$, such that
\begin{enumerate}
\item $D = \varphi^{-1}(B_r(y_0))\cap B_R(x_0)$, $\overline{D} \subset \Omega$, $\varphi(\partial D) \subset \partial B_r(y_0)$ and $\varphi(D) = B_r(y_0)$; \label{carD}
\item $w\circ \varphi(x) = x$ for a.e. $x \in D$; \label{f}
\item $\varphi\circ w(y) = y$ for a.e. $y \in B_r(y_0)$;\label{s}
\item $Dw(y) = (D\varphi)^{-1}(w(y))$ for a.e. $y \in B_r(y_0)$;\label{t}
\item for every Lipschitz function $f: \overline{D} \to \R$, $f\circ w \in W^{1,1}(B_r(y_0))$ and the chain rule holds, i.e.\label{fo}
\[
D(f\circ w)(y) = Df(w(y))Dw(y), \text{ a.e. in }B_r(y_0).
\]
\end{enumerate}
\end{theorem}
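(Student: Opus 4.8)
The plan is to prove this local invertibility result (which is \cite[Theorem 3.1]{FON}) using Rademacher's theorem, a $C^1$ Lusin approximation and Brouwer degree theory to produce a local degree-one branch of $\varphi$, and then to build the Sobolev inverse $w$ from the fact that such a branch has multiplicity one almost everywhere. By the $C^1$ Lusin property of Lipschitz maps, for each $k\in\N$ fix a closed set $F_k\subset\Omega$ and a map $\varphi_k\in C^1(\R^n,\R^n)$ with $|\Omega\setminus F_k|\to0$, $\varphi=\varphi_k$ on $F_k$, and $D\varphi=D\varphi_k$ at every density point of $F_k$. I would prove the statement for every $x_0\in\bigcup_k F_k$ that is a density point of the corresponding $F_k$, a point of differentiability of $\varphi$, and satisfies $\det D\varphi(x_0)>0$; such points fill $\Omega$ up to a Lebesgue-null set, since Lipschitz maps are a.e. differentiable and $|\det D\varphi|>0$ a.e. by hypothesis. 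Fix such an $x_0$, set $y_0=\varphi(x_0)$ and $A=D\varphi(x_0)=D\varphi_k(x_0)$; then $A$ is invertible with $\det A>0$, so, by the inverse function theorem, there are $R_0>0$ and $c>0$ such that $\varphi_k$ is a $C^1$ diffeomorphism on $B_{R_0}(x_0)$, $\dist(y_0,\varphi_k(\partial B_R(x_0)))\ge cR$, and $\deg(\varphi_k,B_R(x_0),\cdot)\equiv1$ on $B_{cR/2}(y_0)$, for all $R\le R_0$.

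Since $x_0$ is a density point of $F_k$, one has $\dist(x,F_k)=o(R)$ uniformly for $x\in B_R(x_0)$ as $R\to0$, and because $\varphi$ and $\varphi_k$ are Lipschitz and coincide on $F_k$ this yields $\|\varphi-\varphi_k\|_{L^\infty(B_R(x_0))}=o(R)$. Hence for $R\le R_0$ small enough and $r\le cR/2$, the affine homotopy $(1-t)\varphi+t\varphi_k$, $t\in[0,1]$, avoids every $y\in B_r(y_0)$ on $\partial B_R(x_0)$, so by homotopy invariance $\deg(\varphi,B_R(x_0),y)=\deg(\varphi_k,B_R(x_0),y)=1$ for all such $y$; in particular $\varphi(\partial B_R(x_0))\cap\overline{B_r(y_0)}=\emptyset$. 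Put $D\doteq\varphi^{-1}(B_r(y_0))\cap B_R(x_0)$. Then $D$ is open, $x_0\in D$, the disjointness just noted gives $\overline D\subset B_R(x_0)\subset\Omega$ and $\varphi(\partial D)\subset\partial B_r(y_0)$, and $\varphi(D)=B_r(y_0)$ because the degree is $\equiv1$ on $B_r(y_0)$; this is property \ref{carD}.

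By a Sard-type consequence of the area formula (the images of the non-differentiability set of $\varphi$ and of $\{\det D\varphi=0\}$ have Lebesgue measure zero), for a.e. $y\in B_r(y_0)$ every preimage $x\in\varphi^{-1}(y)\cap D$ is a point of differentiability with $\det D\varphi(x)>0$; comparing $\varphi$ near $x$ with the affine map $x'\mapsto y+D\varphi(x)(x'-x)$ as above, such an $x$ is isolated in $\varphi^{-1}(y)$ and carries local topological degree $+1$. Since $\overline D$ is compact and $\varphi^{-1}(y)\cap\partial D=\emptyset$, there are only finitely many such $x$, so by excision and the values of the local degrees $\#(\varphi^{-1}(y)\cap D)=\deg(\varphi,D,y)=\deg(\varphi,B_R(x_0),y)=1$. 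Thus $\#(\varphi^{-1}(y)\cap D)=1$ for a.e. $y\in B_r(y_0)$, and I define $w(y)$ to be the unique point of $\varphi^{-1}(y)\cap D$, which gives \ref{f} and \ref{s}. The change-of-variables formula for the a.e.\ injective map $\varphi|_D$ then reads $\int_{B_r(y_0)}h(w(y))\,dy=\int_D h(x)\det D\varphi(x)\,dx$ for every nonnegative Borel $h$; applying it to $h=|\cof D\varphi|/\det D\varphi$ and using $|\cof D\varphi|\in L^\infty(D)$ (as $\varphi$ is Lipschitz) shows $(D\varphi)^{-1}\circ w\in L^1(B_r(y_0))$. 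Exhausting $D$ up to a null set by compact sets on which $\varphi$ is bi-Lipschitz, where $w$ is the classical inverse and $Dw=(D\varphi)^{-1}\circ w$, and absorbing the contribution of the complement via this $L^1$-bound, one obtains $w\in W^{1,1}(B_r(y_0),D)$ with $Dw(y)=(D\varphi)^{-1}(w(y))$ a.e., which is \ref{t}, and then the chain rule \ref{fo} for Lipschitz $f$ follows by composition.

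The two genuinely delicate points are the following. First, the identity $\#(\varphi^{-1}(y)\cap D)=1$ for a.e. $y$ is exactly where the hypothesis $\det D\varphi>0$ a.e., and not merely $\det D\varphi\ne0$, is used: a negative Jacobian at a preimage would contribute local degree $-1$ and allow the multiplicity to exceed the global degree, whereas with positive Jacobian every local degree equals $+1$ and the total degree $\equiv1$ pins the multiplicity down to $1$. Second, promoting the measurable a.e.\ inverse $w$ to a genuine $W^{1,1}$ map with the stated derivative and chain rule requires care precisely because $\det D\varphi$ may degenerate, so one cannot simply invert a single bi-Lipschitz map and must instead exhaust $D$ by sets of bi-Lipschitz behaviour and control the remainder through the integrability of $(D\varphi)^{-1}\circ w$. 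The remaining ingredients are standard: Rademacher's theorem, the $C^1$ Lusin approximation, the area formula for Lipschitz maps, and homotopy invariance and excision for the Brouwer degree.
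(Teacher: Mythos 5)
The paper itself does not prove this theorem: it is imported verbatim from \cite{FON}, with the chain rule in item (5) delegated to \cite{ADM,LM} or to an adaptation of Claim 5 in the proof of \cite[Thm.\ 3.1]{FON}. So your attempt is in effect a re-derivation of the cited result, and the degree-theoretic skeleton you use is indeed the strategy of that reference: the Lusin $C^1$ approximation at a density/differentiability point, the $o(R)$ closeness estimate, homotopy invariance giving $\deg(\varphi,B_R(x_0),\cdot)\equiv 1$ on $B_r(y_0)$, and the excision/local-degree count showing $\#(\varphi^{-1}(y)\cap D)=1$ for a.e.\ $y$ are all correct, and they yield items (1)--(3). (For item (2) you also need the small observation, via the area formula, that $\varphi^{-1}(N)\cap\{\det D\varphi>0\}$ is null whenever $|N|=0$, so that a.e.\ $x\in D$ is mapped to a ``good'' $y$; you use this implicitly.)

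The genuine gap is exactly at the step you flag as delicate, namely $w\in W^{1,1}(B_r(y_0))$ with $Dw=(D\varphi)^{-1}\circ w$, which is the real content of \cite[Thm.\ 3.1]{FON}. Knowing that $w$ coincides, up to null sets, with the Lipschitz inverses $(\varphi|_{E_i})^{-1}$ on countably many measurable pieces $\varphi(E_i)$, and that the candidate gradient $(D\varphi)^{-1}\circ w$ lies in $L^1$, does \emph{not} imply Sobolev regularity: the distributional derivative of such a piecewise-Lipschitz function can carry a singular part on the interfaces between the pieces (a characteristic function is Lipschitz on each of two measurable pieces and has vanishing pointwise gradient, yet is not in $W^{1,1}$), so ``absorbing the contribution of the complement via the $L^1$-bound'' proves nothing. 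One must identify the distributional derivative directly; the standard route is to test with $\psi\in C^\infty_c(B_r(y_0))$, use a.e.\ injectivity and the area formula to write
\[
\int_{B_r(y_0)} w_i(y)\,\partial_j\psi(y)\,dy=\int_D x_i\,\partial_j\psi(\varphi(x))\,\det D\varphi(x)\,dx ,
\]
observe that $\psi\circ\varphi$ has compact support in $D$ because $\varphi(\partial D)\subset\partial B_r(y_0)$, rewrite $\nabla(\psi\circ\varphi)$ through $\cof D\varphi$ and integrate by parts using the distributional Piola identity $\dv(\cof D\varphi)=0$ (valid for Lipschitz $\varphi$); this produces $-\int_{B_r(y_0)}\bigl((D\varphi)^{-1}(w(y))\bigr)_{ij}\psi(y)\,dy$ and hence conclusion (4). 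A similar remark applies to item (5): the chain rule for a Lipschitz $f$ composed with a $W^{1,1}$ map does not ``follow by composition''; it is a nontrivial result, which is precisely why the paper invokes \cite{ADM,LM}. As written, therefore, the proposal establishes (1)--(3) but leaves the two analytically substantive conclusions unproved.
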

\begin{proof}
The first property is not explicitely written in the statement of \cite[Theorem 3.1]{FON} but the definition of $D$ as in \eqref{carD} is given at the beginning of the proof of that Theorem. The existence and regularity of such $w$ and $\eqref{f},\eqref{s},\eqref{t}$ are the content of \cite[Theorem 3.1]{FON}. The validity of the chain rule \eqref{fo} can be deduced from \cite{ADM,LM}, or adapting the proof of \cite[Thm. 3.1, Claim 5]{FON}.
\end{proof}

Finally we recall here the definition and some property of quasiregular mappings. We say that $\varphi\in W_{\loc}^{1,n}(\Omega,\R^n)$ is quasiregular if there exists $K \ge 1$ such that
\[
|D\varphi|^n(x) \le K\det(D\varphi(x)).
\]
The result we are interested in is the following:

\begin{theorem}\label{thm:quas}
Let $\varphi$ be a nonconstant quasiregular Lipschitz mapping. Then, there exists a closed set $B_\varphi$ with $|B_\varphi| = 0$ and topological dimension at most $n - 2$ such that, on $\Omega\setminus B_\varphi$, $\varphi$ is a local homeomorphism. Moreover, if $x_0 \in \Omega\setminus B_\varphi$ and $B_r(x_0)$ is a ball where $\varphi$ is a homeomorphism, $\overline{B_r(x_0)} \subset \Omega$, then $\varphi^{-1}:\varphi(B_r(x_0)) \to B_r(x_0)$ is a $W_{\loc}^{1,n}(\varphi(B_r(x_0)))$ quasiregular mapping, with gradient
\[
D(\varphi^{-1}(y)) = (D\varphi)^{-1}(\varphi^{-1}(y)), \text{ a.e. in }\varphi(B_r(x_0)),
\]
and the chain rule holds, i.e. for every $f \in \Lip(\overline{B_r(x_0)})$, $f\circ \varphi^{-1} \in W^{1,n}_{\loc}(\varphi(B_r(x_0)))$ with
\[
D(f\circ\varphi^{-1})(y) = Df(\varphi^{-1}(y))(D\varphi)^{-1}(\varphi^{-1}(y)), \text{ a.e. in }\varphi(B_r(x_0)).
\]
\end{theorem}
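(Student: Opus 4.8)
The plan is to assemble the statement from the classical structure theory of quasiregular maps, using Theorem~\ref{INV} to handle the Sobolev regularity of the local inverse. Since $\varphi$ is nonconstant, $K$-quasiregular and Lipschitz, the first step is to invoke Reshetnyak's theorem: $\varphi$ is then discrete, open and sense-preserving, it is differentiable almost everywhere with $\det(D\varphi) > 0$ a.e., and it satisfies Lusin's condition~(N). I would define $B_\varphi$ to be the branch set of $\varphi$, i.e. the set of $x \in \Omega$ at which $\varphi$ fails to be a local homeomorphism. By construction $\varphi$ is a local homeomorphism on $\Omega \setminus B_\varphi$, and $B_\varphi$ is closed since the set of local-homeomorphism points is open.

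The two remaining properties of $B_\varphi$, namely $\dim_T B_\varphi \le n-2$ and $|B_\varphi| = 0$, are the genuinely nontrivial input and I would simply quote them: the topological dimension bound for the branch set of a discrete open map is Chernavskii's theorem, while the vanishing of the Lebesgue measure (together with $|\varphi(B_\varphi)| = 0$, which is typically what one wants in the applications) really uses quasiregularity and is due to Reshetnyak, see also Martio--Rickman--V\"ais\"al\"a. This is the main obstacle, in the sense that it cannot be reduced to a short self-contained argument.

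For the second part of the statement, fix $x_0 \in \Omega \setminus B_\varphi$ and a ball $U \doteq B_r(x_0)$ with $\overline U \subset \Omega$ on which $\varphi$ is a homeomorphism; then $V \doteq \varphi(U)$ is open (as $\varphi$ is an open map) and $\psi \doteq \varphi^{-1}\colon V \to U$ is a homeomorphism. To obtain $\psi \in W^{1,1}_{\loc}(V)$ with $D\psi(y) = (D\varphi)^{-1}(\psi(y))$ a.e. and the chain rule for $f\circ\psi$, $f \in \Lip(\overline U)$, I would apply Theorem~\ref{INV} to $\varphi$: for a.e. $p \in U$ it produces an a.e. local inverse defined on a small ball around $\varphi(p)$, and since $\varphi$ is genuinely injective on $U$ this a.e. local inverse must coincide a.e. with $\psi$ on the corresponding open subset of $V$; covering $V$ up to a null set by such subsets transfers the gradient formula and the chain rule of Theorem~\ref{INV} to $\psi$. (Alternatively this is again part of the classical quasiregular theory; routing it through Theorem~\ref{INV} keeps the argument uniform with Section~\ref{s:exact}.)

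It remains to upgrade the integrability and deduce quasiregularity of $\psi$. Here I would use the elementary linear-algebra bound: if $A \in \R^{n\times n}$ satisfies $|A|^n \le K\det A$ (so $\det A > 0$), then, writing $A^{-1} = \cof^T(A)/\det A$ and using $|\cof(A)| \le c(n)|A|^{n-1}$, one gets $|A^{-1}|^n \le c(n)\,K^{n-1}\det(A^{-1})$. Applying this pointwise to $A = D\varphi(\psi(y))$ and using the gradient formula yields $|D\psi(y)|^n \le c(n)K^{n-1}\det(D\psi(y))$ a.e. on $V$. For $V' \Subset V$, the area formula for the Lipschitz map $\varphi$ together with the gradient formula for $\psi$ give $\int_{V'}\det(D\psi)\,dy = |\psi(V')| \le |U| < \infty$, hence $\int_{V'}|D\psi|^n\,dy < \infty$, so $\psi \in W^{1,n}_{\loc}(V)$ and is $c(n)K^{n-1}$-quasiregular. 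Finally, for $f \in \Lip(\overline U)$ the identity $D(f\circ\psi) = Df(\psi)\,D\psi$ from the previous step and $|D(f\circ\psi)| \le \|Df\|_\infty\,|D\psi| \in L^n_{\loc}$ upgrade $f\circ\psi$ to $W^{1,n}_{\loc}(V)$, which is what is claimed.
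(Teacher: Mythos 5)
Your first part (branch set) and your overall strategy of quoting the structure theory are in line with the paper, whose proof of this theorem is purely citation-based: Reshetnyak for the existence and properties of $B_\varphi$, and Bojarski--Iwaniec \cite[Theorem 9.1, Lemma 9.6]{BI} for the quasiregularity of the inverse and the chain rule. The genuine problem is in your main route for the second part. Patching together the a.e.\ local inverses supplied by Theorem~\ref{INV} only gives you the following: there is an open set $V_0\subset V\doteq\varphi(B_r(x_0))$ with $|V\setminus V_0|=0$ (the union of the small balls $B_{r_p}(\varphi(p))$ over a.e.\ $p$) on which $\psi\doteq\varphi^{-1}$ is locally $W^{1,1}$ and satisfies the gradient formula. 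This does \emph{not} yield $\psi\in W^{1,1}_{\loc}(V)$: a continuous map can be smooth with bounded (even vanishing) gradient on an open full-measure subset and still fail to be Sobolev on the whole domain, because its distributional derivative charges the residual closed null set $V\setminus V_0$ --- the Cantor staircase is exactly this phenomenon, and in one dimension it is ruled out only by using monotonicity together with Lusin's condition (N). Excluding such a singular part here is precisely the nontrivial content of ``the inverse of a quasiconformal homeomorphism is quasiconformal'' (ACL and a.e.\ differentiability of the inverse via modulus/capacity estimates), i.e.\ the result the paper cites; it cannot be recovered from the a.e.-local statements of Theorem~\ref{INV} alone. So the parenthetical fallback you mention (quote the classical theory) is not an optional stylistic choice but the missing ingredient, unless you add a genuine argument (e.g.\ an ACL-along-lines or monotonicity argument) showing that $D\psi$ has no singular part on $V\setminus V_0$.

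The surrounding steps are fine and would complete the proof once $\psi\in W^{1,1}_{\loc}(V)$ with $D\psi(y)=(D\varphi)^{-1}(\psi(y))$ is actually established on all of $V$: the pointwise bound $|A^{-1}|^n\le c(n)K^{n-1}\det(A^{-1})$ for $|A|^n\le K\det A$ is correct, the area-formula estimate $\int_{V'}\det(D\psi)\,dy\le|B_r(x_0)|$ is legitimate (with the usual care about the null sets where $\varphi$ is not differentiable or $\det D\varphi=0$), and the final upgrade of $f\circ\psi$ to $W^{1,n}_{\loc}$ via $|D(f\circ\psi)|\le\|Df\|_\infty|D\psi|$ is immediate. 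Likewise, for the first part your citations (Reshetnyak for discreteness, openness, $\det D\varphi>0$ a.e.; Chernavskii for $\dim_T B_\varphi\le n-2$; Reshetnyak/Martio--Rickman--V\"ais\"al\"a for $|B_\varphi|=0$) match what the paper itself relies on.
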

\begin{proof}
The existence of such a \emph{branch} set $B_\varphi$ and its properties can be found in \cite[Ch. II.6 and II.10]{RES}. The fact that the inverse of a quasiregular homeomorphism, which is usually named a quasiconformal map, is itself quasiregular can be found in \cite[Theorem 9.1]{BI}. For the proof of the chain rule, we refer the reader to \cite[Lemma 9.6]{BI}.
\end{proof}

\section{Exact solutions}\label{s:exact}

In this Section we want to prove one of our main results, Theorem \ref{thm:poly}. We will deduce it from the following:

\begin{theorem}\label{lem:const}
Let $\Omega\subset \R^m$ be open and connected and let $n \ge m\ge 2$. Let $u\in \Lip(\Omega,\R^n)$ and $\beta \in L^\infty(\Omega)$ satisfy in the weak sense
\begin{equation}\label{eq:curl}
\curl(\beta Du) = 0.
\end{equation}
Suppose that there exists an $m \times m$ minor $M$ such that
\begin{equation}\label{eq:pos}
\beta\Det(M(Du)) > 0, \text{ a.e. in }\Omega.
\end{equation}
Then, $\beta$ is \emph{essentially} locally constant, in the sense that there exists an open set $\Omega_0$ with $|\Omega\setminus\Omega_0| = 0$ such that, on $\Omega_0$, $\beta$ is locally constant. Moreover, if there exists $\delta > 0$ such that
\begin{equation}\label{eq:delta}
\beta\Det(M(Du)) \ge \delta, \text{ a.e. in } \Omega,
\end{equation}
 then, $\beta$ is constant. 
\end{theorem}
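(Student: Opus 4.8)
The plan is to reduce to the case $m = n$ and exploit local invertibility of quasiregular maps. First I would observe that the problem is local: it suffices to show that near almost every point $\beta$ agrees a.e. with a constant, and that the set of such points is open; connectedness of $\Omega$ then upgrades ``locally constant on $\Omega_0$'' to a genuine statement, and under the uniform bound \eqref{eq:delta} one argues that the ``jump set'' between constancy values is both open and has a complement of measure zero, forcing a single value. So fix a generic point $x_0$. After relabeling coordinates in the target we may assume $M$ is the minor on the first $m$ rows, so that $\varphi \doteq (u^1,\dots,u^m) \in \Lip(\Omega,\R^m)$ satisfies $\beta \det(D\varphi) > 0$ a.e.; replacing $\beta$ by $-\beta$ and $u$ by a map with a sign-flipped component if necessary, assume $\beta > 0$ and $\det(D\varphi) > 0$ a.e.

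Next I would apply Theorem \ref{INV} (or Theorem \ref{thm:quas}) to $\varphi$: for a.e. $x_0$ there is a ball $D \ni x_0$, $\overline D \subset \Omega$, on which $\varphi$ has a Sobolev a.e.-inverse $w : B_r(y_0) \to D$ with $Dw(y) = (D\varphi)^{-1}(w(y))$ and with the chain rule available for Lipschitz functions composed with $w$. The equation $\curl(\beta Du) = 0$ means that locally (on a convex neighborhood) $\beta Du = D\psi$ for some $\psi \in \Lip(D,\R^n)$; in particular $\beta D\varphi = D(\psi^1,\dots,\psi^m) \doteq D\Psi$, so $\beta = \det(D\Psi)/\det(D\varphi)$ wherever the latter is nonzero, hence $\beta \det(D\varphi) = \det(D\Psi) \ge 0$ and $\det(D\Psi) > 0$ a.e. Now pull back along $w$: both $\Psi \circ w$ and $\varphi \circ w = \id$ are defined a.e. on $B_r(y_0)$, the chain rule gives $D(\Psi\circ w)(y) = D\Psi(w(y))\,(D\varphi)^{-1}(w(y))$, and since $\beta D\varphi = D\Psi$ this matrix equals $\beta(w(y))\,\Id$. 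Therefore $\Psi \circ w$ is a Sobolev map on $B_r(y_0)$ whose gradient is a.e. a scalar multiple of the identity. The key step is then the classical fact (via the Cauchy--Riemann equations, or directly: a $W^{1,1}$ map with gradient $\lambda(y)\Id$ has each component harmonic/conjugate and $\lambda$ locally constant — here one can invoke \cite[Theorem 3.1]{FON}-type reasoning, or a Liouville/quasiregularity argument since $\Psi\circ w$ is then conformal) that $\lambda = \beta \circ w$ must be (locally) constant a.e. on $B_r(y_0)$. Pushing back via $\varphi$, which is a homeomorphism of $D$ onto $B_r(y_0)$ off a null set, $\beta$ is a.e. constant on $D$.

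Finally I would assemble the global statement. Let $\Omega_0$ be the union of all such balls $D$; then $|\Omega\setminus\Omega_0|=0$ by the ``a.e. $x_0$'' clause, and on each component of $\Omega_0$ overlapping balls share their constant, so $\beta$ is locally constant on $\Omega_0$. Under \eqref{eq:delta}, $\beta\det(M(Du)) \ge \delta$ prevents $\det(M(Du))$ from vanishing on a positive-measure set, which (combined with the invertibility theorem applied more carefully, or with the observation that the branch set has topological dimension $\le n-2$ so its complement is connected) lets one propagate a single value of $\beta$ across $\Omega$; concretely, the level sets $\{\beta = c\}$ are open modulo null sets and cover the connected set $\Omega$, so only one is nonempty.

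\textbf{Main obstacle.} The delicate point is the middle step: justifying rigorously that a Sobolev (only $W^{1,1}$ a priori) map on $B_r(y_0)$ whose gradient is a.e. a scalar multiple of the identity forces that scalar to be constant, while carefully tracking which objects are defined only almost everywhere and ensuring the chain rule from Theorem \ref{INV}\eqref{fo} applies to the maps at hand. Making the push-forward/pull-back bookkeeping between $D$ and $B_r(y_0)$ precise — in particular that null sets map to null sets in the relevant direction, which is where quasiregularity of $w$ from Theorem \ref{thm:quas} enters — is the technical heart of the argument.
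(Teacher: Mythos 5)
Your overall architecture (a.e.\ local invertibility, pull-back to obtain a gradient that is a scalar multiple of the identity, curl-freeness forcing that scalar to be constant, then globalization through the branch set of a quasiregular map) is the same as the paper's, and the step you single out as the main obstacle is actually the easy part: if $F\in W^{1,1}$ has $DF(y)=\lambda(y)\Id$ a.e.\ with $\lambda\in L^1_{\loc}$, then the distributional curl-free relations $\partial_k\bigl(\lambda\delta_{ij}\bigr)=\partial_j\bigl(\lambda\delta_{ik}\bigr)$ with $i=j\neq k$ give $\partial_k\lambda=0$ for every $k$, hence $\lambda$ is a.e.\ constant on the ball; no Liouville or quasiconformality argument is needed there.

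The genuine gap is your very first reduction: ``replacing $\beta$ by $-\beta$ and $u$ by a map with a sign-flipped component if necessary, assume $\beta>0$ and $\det(D\varphi)>0$ a.e.'' Hypothesis \eqref{eq:pos} only controls the \emph{product} $\beta\det(M(Du))$ pointwise; $\beta$ and $\det(M(Du))$ may each change sign from one measurable set of positive measure to another, and this is exactly the situation the theorem must handle (compare the remark after the statement, where $\beta$ and $\det(Du)$ take values in $\{-1,1\}$). A global sign flip of $\beta$ together with one component of $u$ cannot normalize both factors to be positive, so you are not entitled to apply Theorem \ref{INV} (which requires $\det>0$ a.e.) or Theorem \ref{thm:quas} (which requires the distortion inequality, hence a positive Jacobian) to $\varphi=u$ — neither for the local statement nor for the case \eqref{eq:delta}. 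The paper's way around this is precisely the structural idea your plan is missing: after writing $\beta Du=Dv$ locally, one inverts not $u$ but the mixed map $\varphi=(v_1,u_2,\dots,u_n)$, whose Jacobian equals $\beta\det(Du)>0$ by hypothesis (and which is quasiregular under \eqref{eq:delta}), and pulls back the complementary mixed map $\psi=(u_1,v_2,\dots,v_n)$; the resulting gradient is the diagonal field $\diag(1/\beta,\beta,\dots,\beta)$ evaluated along $w$, and its curl-freeness again forces $\beta\circ w$ to be constant. With that substitution, the rest of your outline (essential surjectivity of $w$ onto $D$, taking $\Omega_0$ as the union of the neighborhoods, and the codimension-two branch set not disconnecting $\Omega$ in the $\delta$-case) goes through as in the paper. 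A small additional slip: from $\beta D\varphi=D\Psi$ one gets $\det(D\Psi)=\beta^{m}\det(D\varphi)$, not $\beta=\det(D\Psi)/\det(D\varphi)$.
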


\begin{remark}
An additional assumption as \eqref{eq:pos} is needed: as stated in \cite[Question 10]{KMS}, given a ball $\Omega \subset \R^2$, it is possible to construct a measurable function $\beta: \Omega \to \{-1,1\}$ and $u \in \Lip(\Omega,\R^2)$ with $\Det(Du): \Omega \to \{-1,1\}$ such that in the sense of distributions
\[
\curl(\beta Du) = 0,
\]
but $\beta$ is not constant on any open subset of $\Omega$.
\end{remark}

In Subsection \ref{subsec:transport}, we will give an alternative (and informal) proof of the previous theorem in a simplified case. This will highlight some geometric features of the assertion, that are hidden in the following proof.

\begin{proof}[Proof of Theorem \ref{lem:const}]
Without loss of generality, we can suppose that $m = n$ and therefore \eqref{eq:pos} reads as
\begin{equation}\label{eq:poss}
\beta\Det(Du) > 0.
\end{equation}
Moreover, up to restricting to balls we can assume that $\Omega$ is convex, thus \eqref{eq:curl} is equivalent to
\[
\beta Du = Dv,
\]
for some $v \in \Lip(\Omega,\R^n)$. Let
\[
u = \left(\begin{array}{c}
u_1\\
u_2\\
\dots\\
u_n
\end{array}
\right)
\text{ and }
v = \left(\begin{array}{c}
v_1\\
v_2\\
\dots\\
v_n
\end{array}
\right).
\]
We introduce the two maps 
\[
\varphi(x) \doteq
\left(
\begin{array}{c}
v_1\\
u_2\\
u_3\\
\dots\\
u_n
\end{array}
\right),\quad
\psi(x) \doteq
\left(
\begin{array}{c}
u_1\\
v_2\\
v_3\\
\dots\\
v_n
\end{array}
\right).
\]
We have that
\begin{equation}\label{Dfi}
D\varphi =
D\left(
\begin{array}{c}
v_1\\
u_2\\
u_3\\
\dots\\
u_n
\end{array}
\right)=
\left(
\begin{array}{c}
\beta Du_1\\
Du_2\\
Du_3\\
\dots\\
Du_n
\end{array}
\right)
\end{equation}
This yields $\det(D\varphi) = \det(Du)\beta(x) >0$, from hypothesis \eqref{eq:pos}. Let $x_0 \in \Omega$ be a point at which Theorem \ref{INV} applies. This means that we find $r > 0$, a compactly contained open set $D \subset \Omega$ and $w \in W^{1,1}(B_r(y_0))$, a.e. local inverse of $\varphi$ in the sense of Theorem \ref{INV}. Let $B\doteq B_r(\varphi(x_0))$ be the domain of $w$. Theorem \ref{INV}\eqref{t} shows that
\[
Dw(y) = (D\varphi)^{-1}(w(y)), \quad \text{ for a.e. }y \in B.
\]
Consider the map $g(y)\doteq \psi\circ \varphi^{-1}(y)$. By Theorem \ref{INV}\eqref{fo}, this is a $W^{1,1}(B,\R^n)$ map, whose differential is given a.e. in $B$ by the matrix
\begin{equation}\label{Dg}
Dg(y) = D\psi(w(y))Dw(y) = D\psi(w(y))(D\varphi)^{-1}(w(y))
\end{equation}
We compute explicitely the matrix-field
\[
z\mapsto D\psi(z)(D\varphi)^{-1}(z).
\]
We have
\[
D\psi = 
D\left(
\begin{array}{c}
u_1\\
v_2\\
v_3\\
\dots\\
v_n
\end{array}
\right)=
\left(
\begin{array}{c}
Du_1\\
\beta Du_2\\
\beta Du_3\\
\dots\\
\beta Du_n
\end{array}
\right).
\]
Consider the set $D' \subset D$ of points $z$ such that $z$ is a Lebesgue point for $\beta, Du, D\psi, D\varphi$ and for $(D\varphi)^{-1}$, $Du(z)$ and $D\varphi(z)$ are invertible and $\beta(z) \neq 0$. Since all the functions under consideration are $L^1_{\loc}$ and by \eqref{eq:pos}, $|D\setminus D'| = 0$. At any such (fixed) point $z$, denote the columns of $(Du(z))^{-1}$ by $Y_1,\dots, Y_n \in \R^n$, i.e.
\[
(Du(z))^{-1} = (Y_1|\dots |Y_n).
\]
Notice that $Y_i$ depends on $z$, but we drop the dependence since we anyway want to make a computation for fixed $z$. We claim that
\begin{equation}\label{claiminv}
(D\varphi)^{-1}(z) = \left(\frac{1}{\beta(z)}Y_1|\dots | Y_n\right) \doteq N(z).
\end{equation}
Indeed, notice that $(Du_i(z),Y_j) = \delta_{ij}$ for all $1\le i,j\le n$, where $\delta_{ij}$ denotes Kronecker's delta. Thus, combining \eqref{Dfi} with the expression of $N(z)$
\[
(D\varphi(z)N(z))_{ij} =
\begin{cases}
\left(\beta(z)Du_1,\frac{1}{\beta(z)}Y_1\right) = 1, &\text{ if } i = j = 1\\
\left(\beta(z)Du_i,Y_1\right) = 0, &\text{ if } i \neq 1, j= 1\\
\left(\beta(z)Du_1,Y_j\right) = 0, &\text{ if } i = 1, j \neq 1\\
\left(Du_i(z),Y_j\right) = \delta_{ij}, &\text{ otherwise}.
\end{cases}
\]
Hence \eqref{claiminv} is checked. Now we can easily compute
\[
(D\psi(z)(D\varphi)^{-1}(z))_{ij} =(D\psi(z)N(z))_{ij} =
\begin{cases}
\left(Du_1,\frac{1}{\beta(z)}Y_1\right) = \frac{1}{\beta(z)}, &\text{ if } i = j = 1\\
\left(\beta(z)Du_i,\frac{1}{\beta(z)}Y_1\right) = 0, &\text{ if } i \neq 1, j= 1\\
\left(Du_1,Y_j\right) = 0, &\text{ if } i = 1, j \neq 1\\
\left(\beta(z)Du_i(z),Y_j\right) = \beta(z)\delta_{ij}, &\text{ otherwise}.
\end{cases}
\]
In other words, for a.e. $z \in D$ we have
\[
D\psi(z)(D\varphi(z))^{-1}=
\left(
\begin{array}{cc}
\begin{matrix}
    1/\beta(z) &0 &0 & 0 & \dots  & 0 \\
    0 & \beta(z) &0 & 0 & \dots&0 \\
    0 & 0 & \beta(z) & 0 &\dots & 0 \\
    0 & 0 & 0& \beta(z) & \dots & 0\\
    \dots & \dots &  \dots& \dots & \dots & \dots\\
    0 & 0 &0 & 0& \dots  &  \beta(z)
\end{matrix}
\end{array}
\right).
\]
Let $A(y) \doteq D\psi(w(y))(D\varphi(w(y)))^{-1}$. Recalling \eqref{Dg}, we have that $\curl(A) = 0$ in the sense of distributions. For a diagonal matrix-field $M = \diag(m_1(y),\dots, m_n(y))$, having $\curl(M) = 0$ is equivalent to say that $m_i(y)$ only depends on $y_i$. Thus, due to the special form of $A$, we obtain that $y \in B \mapsto \beta(w(y))$ is constant. By Theorem \ref{INV}\eqref{carD}-\eqref{f} we know that $w$ is essentially surjective, in the sense that $|w(B)\setminus D| = 0$. Therefore, $\beta$ is constant on $D$. If we let 
\begin{equation}\label{eq:E}
E =\{x \in \Omega: x \text{ admits an a.e. local inverse in a neighborhood } D(x)\},
\end{equation}
 we can consider
\[
\Omega_0 \doteq \bigcup_{x \in E}D(x).
\]
By Theorem \ref{INV}, $|\Omega\setminus E| = 0$, hence $|\Omega\setminus \Omega_0| = 0$ and, by the proof above, on $\Omega_0$ the function $\beta$ is locally constant.
\\
\\
To conclude the proof of the present Theorem, we need to show \eqref{eq:delta}. To do so, it is sufficient to notice that \eqref{eq:delta}, the Lipschitz property of $u$ and $\beta \in L^\infty$ imply that $\varphi$ is a quasiregular mapping. The proof above can be applied again in the neighborhood of every $x_0$ where $\varphi$ is an homeomorphism, since by Theorem \ref{thm:quas} in those neighborhoods we have a.e. local invertibility in the sense of Theorem \ref{INV}. In this case, though, we have the additional information that the set $E$ introduced in $\eqref{eq:E}$ has complementary with topological dimension $\le n - 2$. Thus $E^c$ does not disconnect $\Omega$, and we conclude that $\beta$ is constant on $\Omega$.
\end{proof}

Theorem \ref{lem:const} yields as a corollary Theorem \ref{thm:poly}, that we recall here.
\begin{corollary}\label{thm:polysec}
Let $\Omega \subset \R^2$ be open and connected. Let $u \in \Lip(\Omega,\R^2)$ and $g$ satisfy \eqref{HP}. Suppose that \eqref{EL} holds in the weak sense and that
\[
\det(Du) \neq 0, \text{ a.e. in }\Omega.
\]
Then, $\det(Du)$ is \emph{essentially} locally constant, in the sense that there exists an open set $\Omega_0$ with $|\Omega\setminus\Omega_0| = 0$ such that, on $\Omega_0$, $x\mapsto \det(Du)$ is locally constant. If, moreover, there exists $\delta > 0$ such that $\det^2(Du)\ge \delta$, then $\det(Du)$ is constant in $\Omega$.
\end{corollary}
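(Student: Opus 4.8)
The plan is to deduce this statement directly from Theorem~\ref{lem:const} by making the choice $\beta := g'(\det(Du))$. First I would invoke the chain of equivalences \eqref{equivalences}, which is purely algebraic together with the two‑dimensional identity $\dv B = 0 \Leftrightarrow \curl(BJ)=0$ and so holds on all of $\Omega$ without any convexity assumption: a map $u$ solves \eqref{EL} in the weak sense precisely when $\curl(g'(\det(Du))\,Du)=0$ in the weak sense. Since $u$ is Lipschitz, $\det(Du)\in L^\infty(\Omega)$, and since $g\in C^2(\R)$ by \eqref{HP}, the composition $\beta=g'(\det(Du))$ belongs to $L^\infty(\Omega)$. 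Hence $u,\beta$ satisfy \eqref{eq:curl} with $\beta\in L^\infty$, and we are in the setting $m=n=2$, where the unique $2\times2$ minor of $Du$ is $Du$ itself and $\Det(M(Du))=\det(Du)$.

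Next I would verify the sign hypothesis \eqref{eq:pos}. By \eqref{HP} we have $g''>0$ everywhere and $g'(0)=0$, so $g'$ is strictly increasing and vanishes only at the origin; consequently $t\,g'(t)>0$ for every $t\neq 0$. Combining this with the hypothesis $\det(Du)\neq 0$ a.e. gives $\beta\,\det(Du)=g'(\det(Du))\,\det(Du)>0$ a.e.\ in $\Omega$. Theorem~\ref{lem:const} then yields an open set $\Omega_0$ with $|\Omega\setminus\Omega_0|=0$ on which $\beta=g'(\det(Du))$ is locally constant. To transfer this to $\det(Du)$ I would use that $g'$, being strictly increasing, is injective: on each connected component $U$ of $\Omega_0$ there is a constant $c_U$ with $g'(\det(Du))=c_U$ a.e.\ on $U$, hence $\det(Du)=(g')^{-1}(c_U)$ a.e.\ on $U$. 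This gives the first assertion, the essential local constancy of $x\mapsto\det(Du)$ on $\Omega_0$.

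For the global statement I would upgrade \eqref{eq:pos} to the quantitative bound \eqref{eq:delta}. Since $u$ is Lipschitz, $|\det(Du)|\le C$ a.e.\ with $C=\tfrac12\|Du\|_{L^\infty(\Omega)}^2$, so under $\det^2(Du)\ge\delta$ the function $\det(Du)$ takes values a.e.\ in the compact set $K_\delta=\{t\in\R:\sqrt\delta\le|t|\le C\}$, which excludes $0$. On $K_\delta$ the continuous map $t\mapsto t\,g'(t)$ is strictly positive, hence bounded below by some $\delta'>0$, so $\beta\,\det(Du)\ge\delta'$ a.e.; the second part of Theorem~\ref{lem:const} then forces $\beta$ to be constant on $\Omega$, and injectivity of $g'$ promotes this to $\det(Du)$ constant on $\Omega$. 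I do not expect a genuine obstacle here, since the statement is a corollary; the only points needing care are the membership $\beta\in L^\infty$ (where Lipschitz regularity of $u$ is used in two places — to get $\det(Du)\in L^\infty$ and, in the last step, the upper bound $C$), and the bookkeeping that ``locally constant'' refers to the possibly disconnected set $\Omega_0$, with the injectivity of $g'$ serving as the bridge between $\beta$ and $\det(Du)$.
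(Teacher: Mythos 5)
Your proposal is correct and follows essentially the same route as the paper: both reduce the statement to Theorem~\ref{lem:const} with the choice $\beta = g'(\det(Du))$, using \eqref{equivalences} and the strict monotonicity of $g'$ from \eqref{HP}. The only cosmetic difference is in verifying \eqref{eq:delta}: the paper uses uniform convexity of $g$ on compact sets to get $g'(t)t \ge \alpha t^2$ for $|t|\le L$, while you take the minimum of the continuous positive function $t\mapsto tg'(t)$ on $\{\sqrt{\delta}\le |t|\le C\}$ — both yield the required lower bound, so there is no substantive divergence.
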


\begin{proof}
Define $\beta(x)\doteq g'(\det(Du))(x)$. We want to apply Theorem \ref{lem:const} with $n = m = 2$. In this case, $M(A) = A$, $\forall A \in \R^{2\times 2}$. We only need to prove that if $\det(Du) \neq 0$, then \eqref{eq:pos} holds, and if $\det(Du)^2\ge \delta$, then $\eqref{eq:delta}$ holds. Notice that because of the hypotheses on $g$ given by \eqref{HP}, then $g$ is uniformly convex on compact set. If we assume that $|\det(Du)|(x) \le L$, for a.e. $x \in \Omega$, then we find $\alpha = \alpha(L) > 0$ such that $g''(t) \ge \alpha, \forall |t|\le L$. Therefore, the convexity of $g$ and the assumption $g'(0) = 0$ yield
\[
g'(t)t = (g'(t)-g'(0))(t - 0) \ge \alpha t^2,\quad \forall |t| \le L,
\]
and hence
\[
\beta(x)\det(Du)(x) = g'(\det(Du)(x))\det(Du(x)) \ge \alpha \det(Du(x))^2,\quad \text{a.e. in }\Omega.
\]
Therefore $\det(Du) \neq 0$ a.e. and $\det(Du)^2\ge \delta$ imply \eqref{eq:pos} and \eqref{eq:delta} respectively (where the $\delta$ of Theorem \ref{lem:const} has to be substituted with $\alpha\delta$ > 0). Theorem \ref{lem:const} finishes the proof.
\end{proof}

\subsection{Another proof of Theorem \ref{lem:const}}\label{subsec:transport}

In this subsection we wish to give a different proof of Theorem \ref{lem:const} in a simplified setting. We believe that this highlights some geometrical features of the problem which are hidden in the proof of Theorem \ref{lem:const}. Let $\Omega \subset \R^2$ be an open and convex bounded domain. Assume $u \in  \Lip(\Omega,\R^2)$ is a solution to
\[
\dv(\Det(Du)\cof^T(Du)) = 0,
\]
i.e. to
\begin{equation}\label{eq:usual}
\curl(\Det(Du)Du) = 0.
\end{equation}
Let us further suppose that $\Det(Du) \in \{1,-1\}$ a.e. in $\Omega$, which is analogous to condition \eqref{eq:pos}. Our aim is to show that $\Det(Du)$ is constant on $\Omega$. Since $\Omega$ is convex, \eqref{eq:usual} implies that there exists $v: \Omega\to\R^2$ such that, a.e. in $\Omega$,
\begin{equation}\label{eq:def}
\det(Du)Du = Dv.
\end{equation}
As in Theorem \ref{lem:const}, the key is to consider the maps
\[
\varphi(x) \doteq
\left(
\begin{array}{c}
u_1\\
v_2
\end{array}
\right),\quad
\psi(x) \doteq
\left(
\begin{array}{c}
v_1\\
u_2\\
\end{array}
\right).
\]
For a.e. $x \in \Omega$, $\det(D\varphi)(x) = \det(D\psi)(x) = 1$. This and the assumption that $\varphi$ and $\psi$ is Lipschitz imply that $\varphi$ and $\psi$ are quasiregular mappings, therefore we can use Theorem \ref{thm:quas} to deduce that in the neighborhood of a.e. point $\varphi$ and $\psi$ are homeomorphisms. Let $x_0$ be such a point and $B$ denote its neighborhood. We need now to consider the level sets of $u_1,u_2,v_1$ and $v_2$. Assume for the moment that a connected component of $\{x \in B: u_1(x) = u\}$ is parametrized by an injective Lipschitz curve $\gamma:[0,1]\to \R^2$ with $\gamma'(t) \neq 0$ for a.e. $t$, on which $u_i\circ \gamma$ and $v_i\circ \gamma$ are differentiable a.e. and the chain rule holds, i.e. for a.e. $t \in [0,1]$,
\begin{equation}\label{chaincurve}
\frac{d}{dt}u_i(\gamma(t)) = (Du_i(\gamma(t)),\gamma'(t)), \quad \frac{d}{dt}v_i(\gamma(t)) = (Dv_i(\gamma(t)),\gamma'(t)).
\end{equation}
Since $\gamma$ parametrizes a level set of $u_1$, \eqref{chaincurve} implies
\begin{equation}\label{derivaort}
\gamma'(t) = \lambda(t)JDu_1\circ \gamma(t) \text{ for a.e. }t \in [0,1],
\end{equation}
where $\lambda\in L^\infty([0,1])$, $\lambda(t) \neq 0$ for a.e. $t$, and $J$ is the rotation of $90$ degrees chosen in such a way that for every matrix $A \in \R^{2\times 2}$ with rows $A_1,A_2$, $(A_2,JA_1) = \det(A)$. Moreover, \eqref{eq:def} is telling us that $Dv_1$ is parallel to $Du_1$. Hence there exists $v \in \R$ such that $v\circ \gamma(t) \equiv v$. Anyway, as noticed above, $\varphi$ and $\psi$ are injective in $B$. Therefore, $u_1\circ \gamma \equiv u$ and $\varphi$ injective implies $v_2\circ \gamma$ injective, and $v_1\circ \gamma \equiv v$ and $\psi$ injective implies $u_2\circ\gamma$ injective. In particular, $u_2\circ\gamma$ and $v_2\circ\gamma$ are monotone and thus their derivatives are non-negative or non-positive. Hence, using that $\Det^2(Du) = 1$ a.e.,
\[
\frac{d}{dt}v_2\circ\gamma(t) = (Dv_2(t),\gamma'(t)) \overset{\eqref{eq:def}-\eqref{derivaort}}{=} \det(Du)\lambda(t)(Du_2,JDu_1)(\gamma(t)) = \lambda(t)\Det^2(Du) = \lambda(t)
\]
and
\[
\frac{d}{dt}u_2\circ\gamma(t) = (Du_2(t),\gamma'(t)) = \lambda(t)(Du_2,JDu_1)(\gamma(t)) = \lambda(t)\det(Du)\circ \gamma(t).
\]
Since these derivatives must have a sign and $\det(Du) \in \{1,-1\}$, we find that $\det(Du)\circ \gamma$ must be constant. We could make the same reasoning on the level curves of $u_2$, to obtain similarly that $\det(Du)$ is constant on Lipschitz curves parametrizing level sets of $u_2$. Heuristically speaking, since $\det(Du) \neq 0$ a.e., the level sets $\{u_1 = u\}$ and $\{u_2 = u'\}$ must intersect transversely, and one can expect that this and the argument above shows $\Det(Du) \equiv c$ on $B$. To this end, we consider as a new parametrization $\varphi^{-1}(y)$. Indeed, if we consider a small open cube $Q = (a,b)^2$ centered at $\varphi(x_0)$ in such a way that $Q \subset \varphi(B)$, then the open neighborhood of $x_0$ given by $\varphi^{-1}(Q)$ is naturally foliated by Lipschitz curves where $u_1$ is constant, corresponding to $s\mapsto \varphi^{-1}(t,s)$, and Lipschitz curves where $v_2$ (and hence $u_2$) is constant, corresponding to $t\mapsto \varphi^{-1}(t,s)$. The reasoning above implies that for a.e. $t \in \R$, the determinant remains constant on the independent foliations of $\varphi^{-1}(Q)$ in Lipschitz curves of the form $s\mapsto \varphi^{-1}(t,s)$ and $t\mapsto \varphi^{-1}(t,s)$, but it is now immediate to see that this implies the constancy of the determinant on the whole open set $\varphi^{-1}(Q)$.
\\
\\
This proof does not work if $\det^2(Du)$ is not constant, but anyway provides us with the right elements to use in the more general case. In particular, it shows the geometrical importance of the new parametrization $\varphi^{-1}$, which was also used in an essential way in the proof of Theorem \ref{lem:const}.

\subsection{Higher dimensions}

In this subsection we show Theorem \ref{thm:poly} in every dimension, requiring anyway stronger assumptions on the solutions, compare \eqref{eq:poshigh} with \eqref{notzeroint}.

\begin{theorem}\label{thm:polygensec}
Let $\Omega \subset \R^n$ be open and connected. Let $u \in \Lip(\Omega,\R^n)$ and $g$ satisfy \eqref{HP}. Suppose that \eqref{EL} holds in the weak sense and that
\begin{equation}\label{eq:poshigh}
\det(Du) > 0, \text{ a.e. in }\Omega.
\end{equation}
Then, $\det(Du)$ is \emph{essentially} locally constant, in the sense that there exists an open set $\Omega_0$ with $|\Omega\setminus\Omega_0| = 0$ such that, on $\Omega_0$, $x\mapsto \det(Du)$ is locally constant. If, moreover, there exists $\delta > 0$ such that $\det(Du)\ge \delta$, then $\det(Du)$ is constant in $\Omega$.
\end{theorem}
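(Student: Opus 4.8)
The plan is to argue directly rather than to invoke Theorem~\ref{lem:const}: in dimension $n\ge3$ the Euler--Lagrange equation \eqref{EL} does \emph{not} force $\curl(g'(\det Du)\,Du)=0$, so the mechanism behind Theorem~\ref{lem:const} (which in $n=2$ is available through \eqref{equivalences}) is lost. What replaces it is the observation that under \eqref{eq:poshigh} the map $u$ itself is the one to which the local inversion theorems apply. Set $\beta\doteq g'(\det Du)\in L^\infty(\Omega)$; since $g''>0$ and $g'(0)=0$, $g'$ is strictly increasing, hence injective, so $\beta>0$ a.e.\ and it suffices to prove that $\beta$ is essentially locally constant, resp.\ constant, because then $\det Du=(g')^{-1}(\beta)$ inherits the same property. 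As $\det Du>0$ a.e.\ and $u$ is Lipschitz, Theorem~\ref{INV} applies with $\varphi=u$: for a.e.\ $x_0\in\Omega$ there are $r,R>0$, an open set $D=u^{-1}(B_r(y_0))\cap B_R(x_0)\Subset\Omega$ with $y_0=u(x_0)$, satisfying $u(D)=B_r(y_0)$ and $u(\partial D)\subset\partial B_r(y_0)$, and an a.e.\ inverse $w\in W^{1,1}(B_r(y_0),D)$ with $w\circ u=\id$ a.e.\ on $D$ and $u\circ w=\id$ a.e.\ on $B_r(y_0)$.

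The core step is to test the weak form of \eqref{EL} (which, as $\beta\,\cof^T(Du)\in L^\infty$, extends to $\Lip_c$ test fields) against fields pulled back by $u$. Fix $x_0$ as above and $\Phi\in C_c^\infty(B_r(y_0),\R^n)$. Since $\spt\Phi\Subset B_r(y_0)$ while $u(\partial D)\subset\partial B_r(y_0)$, the set $u^{-1}(\spt\Phi)\cap D$ is compact and contained in $D$; choosing $\theta\in C_c^\infty(D)$ with $\theta\equiv1$ in a neighborhood of it, the field $\zeta\doteq(\Phi\circ u)\,\theta$, extended by $0$, lies in $\Lip_c(\Omega,\R^n)$, agrees with $\Phi\circ u$ on $D$, and satisfies $D\zeta=(D\Phi\circ u)\,Du$ a.e.\ on $D$. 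Using the elementary identity $\cof^T(X)X^T=(\det X)\,\Id$,
\[
0=\int_\Omega\langle\beta\,\cof^T(Du),D\zeta\rangle\,dx=\int_D\beta\,\big\langle\cof^T(Du)(Du)^T,\,D\Phi\circ u\big\rangle\,dx=\int_D\beta\,(\det Du)\,(\dv\Phi)\circ u\,dx.
\]
By the area formula, together with $\det Du>0$ and the essential bijectivity of $u|_D$ onto $B_r(y_0)$ from Theorem~\ref{INV}, the last integral equals $\int_{B_r(y_0)}\beta(w(y))\,\dv\Phi(y)\,dy$. Since $\Phi$ was arbitrary, $\nabla(\beta\circ w)=0$ in $\mathcal D'(B_r(y_0))$, so $\beta\circ w\equiv c(x_0)$ a.e.\ on $B_r(y_0)$; as $w\circ u=\id$ a.e.\ on $D$ and $u$ sends Lebesgue-null subsets of $B_r(y_0)$ to null sets (again by the area formula and $\det Du>0$), we get $\beta\equiv c(x_0)$ a.e.\ on $D$.

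To globalize, let $E$ be the full-measure set of $x$ admitting such a neighborhood $D(x)$, and set $\Omega_0\doteq\bigcup_{x\in E}D(x)$: then $|\Omega\setminus\Omega_0|=0$, the constants $c(x)$ agree a.e.\ on overlaps, and $\beta$, hence $\det Du$, is locally constant on $\Omega_0$. For the last assertion: if $\det Du\ge\delta$ then $u$ is a nonconstant quasiregular Lipschitz map (as $|Du|^n\le C\le C\delta^{-1}\det Du$ a.e.), so by Theorem~\ref{thm:quas} it is a local homeomorphism off a closed branch set $B_u$ with $|B_u|=0$ and topological dimension $\le n-2$; around each point of $\Omega\setminus B_u$ the same computation---now with a genuine local homeomorphism and its $W^{1,n}$ quasiregular inverse, using the chain rule of Theorem~\ref{thm:quas}---yields that $\beta$ is a.e.\ constant on a neighborhood. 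Since $B_u$ does not disconnect $\Omega$, $\beta$ equals a single constant a.e.\ on the connected open set $\Omega\setminus B_u$, hence a.e.\ on $\Omega$, so $\det Du$ is constant.

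The main obstacle is not a computation but the fact that $\beta=g'(\det Du)$ is a priori only $L^\infty$, so the naive step ``$\cof^T(Du)\nabla\beta=0\Rightarrow\nabla\beta=0$'' (morally the $n=2$ argument) has no meaning. The device that rescues it in every dimension is precisely the change of variables by the a.e.\ local inverse of $u$, after which the derivative that would land on $\beta$ lands instead on the smooth test field $\Phi$. The remaining points---admissibility and compact support in $\Omega$ of the pulled-back field $\Phi\circ u$, legitimacy of the area-formula change of variables for the merely $W^{1,1}$ inverse $w$, and the absence of mass created by $u$ on null sets---are all handled by the precise structure of $D$ and the a.e.\ identities in Theorem~\ref{INV}, and by Theorem~\ref{thm:quas} when $\det Du\ge\delta$.
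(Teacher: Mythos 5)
Your proposal is correct and takes essentially the same route as the paper: apply Theorem \ref{INV} to $u$ itself, test \eqref{EL} with fields pulled back as $\Phi\circ u$, pass to the image via the area formula to get $\int_{B}\bigl(g'(\det Du)\circ w\bigr)\,\dv\Phi\,dy=0$, globalize over the full-measure set of points admitting an a.e.\ local inverse, and invoke Theorem \ref{thm:quas} and the codimension-two branch set for the case $\det Du\ge\delta$. The only difference is organizational and harmless: you use the pointwise identity $\cof^T(X)X^T=\det(X)\Id$ before changing variables (so you only need the elementary chain rule for $\Phi\circ u$ plus a cutoff), whereas the paper changes variables first and then uses Theorem \ref{INV}\eqref{t}--\eqref{fo} to identify $\tr(D(\eta\circ w))$ with $\dv(\eta\circ w)$ for $\eta=\Phi\circ u$.
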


\begin{proof}
We again invoke Theorem \ref{INV}, and we fix a point $x_0$ such that in an open neighborhood $D= D(x_0)$ the map $u$ is almost invertible with almost inverse $w: D \to B_r(u(x_0))$. Let $B\doteq B_r(u(x_0))$. The Euler-Lagrange equations \eqref{EL} read as
\[
0 = \int_Dg'(\det(Du))\langle \cof^T(Du),D\eta \rangle dx, \quad \forall \eta \in C_c^\infty(D,\R^n).
\]
A standard approximation argument shows that this holds for $\eta \in W^{1,1}_0(D,\R^n)$. As we will need to use the chain rule of Theorem \ref{INV}\eqref{fo}, we fix any $\eta \in W^{1,\infty}_0(D;\R^n)$. We use \eqref{EL} and the area formula to find
\begin{align*}
0 &= \int_Dg'(\det(Du))\langle \cof^T(Du),D\eta \rangle dx= \int_Df(x)\det(Du)(x)\langle \cof^T(Du),D\eta \rangle dx \\
&= \int_{B}f(w(y))\langle \cof^T(Du)(w(y)), D\eta(w(y))\rangle dy,
\end{align*}
where $f(x) \doteq \frac{g'(\det(Du))(x)}{\det(Du)(x)}$.
Using Theorem \ref{INV}\eqref{t}, we rewrite
\[
 \cof^T(Du)(w(y)) =  \det(Du)(w(y))(Du)^{-T}(w(y)) = \det(Du)(w(y))Dw(y)^T,
\]
where $X^{-T} = (X^{-1})^T$ for every invertible $X \in \R^{n\times n}$. Therefore, continuing our chain of equalities,
\begin{align*}
0 &= \int_{B}f(w(y))\langle \cof^T(Du)(w(y)), D\eta(w(y))\rangle dy = \int_{B}f(w(y))\det(Du)(w(y))\langle Dw(y)^T, D\eta(w(y))\rangle dy \\
&= \int_{B}g'(\det(Du)(w(y)))\tr(D\eta(w(y))Dw(y)) dy = \int_{B}g'(\det(Du)(w(y)))\tr(D(\eta\circ w)(y)) dy \\
&= \int_{B}g'(\det(Du)(w(y)))\diver(\eta\circ w)(y) dy.
\end{align*}
Notice that in the second line we used the chain rule of Theorem \ref{INV}\eqref{fo}. If we manage to show that for every $\Phi \in W^{1,\infty}_0(B,\R^n)$, there exists $\eta \in W^{1,\infty}_0(D,\R^n)$ such that
\begin{equation}\label{eq:surj}
\Phi = \eta\circ w,
\end{equation}
then the previous equation would tell us that $g'(\det(Du)) \equiv C \in \R$, and, by the injectivity of $t\mapsto g'(t)$, we would conclude the proof. We only need to show \eqref{eq:surj}. To do so, we fix $\Phi \in W^{1,\infty}_0(B,\R^n)$ and we observe that \eqref{eq:surj} is uniquely solved by $\eta \doteq \Phi\circ u$. By our assumptions, both $\Phi$ and $u$ are Lipschitz, and hence so is $\eta$. Therefore, $\eta \in W^{1,\infty}(D,\R^n)$. We only need to check that $\eta(x) = 0$, $\forall x \in \partial D$. This would imply $\eta \in W^{1,\infty}_0(D,\R^n)$ since $\eta$ is continuous. To show this, we use Theorem \ref{INV}\eqref{f}, to find that $u(\partial D ) \subset \partial u(D) = \partial B$. Thus if $x \in \partial D$, then $u(x) \in \partial B$, and hence $\Phi(u(x)) = 0$ since $\Phi \in W^{1,\infty}_0(B,\R^n)$. It follows that $\eta \in W^{1,\infty}_0(D,\R^n)$ and the proof is concluded.
\\
\\
Now the way to conclude is completely analogous to the conclusion of the proof of Theorem \ref{lem:const}. Finally, if we assume the stronger condition $\det(Du) \ge \delta$ a.e., then we can again apply the quasiregular mappings theory and in particular Theorem \ref{thm:quas} to find that $\det(Du)$ is globally constant in $\Omega$.
\end{proof}

\subsection{Stationary points and solutions with higher regularity}\label{subs:higher}

In this subsection we collect some results similar to the one stated in Corollary \ref{thm:polysec} with additional assumptions on the regularity of the solution $u$. In particular, we start by proving Theorem \ref{cor:poly}, that we first recall.

\begin{corollary}\label{cor:polysec}
Let $\Omega \subset \R^2$ be open and connected and let $u\in \Lip(\Omega,\R^2)$ be a stationary point of the energy \eqref{EN}. Then, $x \mapsto \det(Du)(x)$ is constant.
\end{corollary}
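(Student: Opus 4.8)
The plan is to extract from the \emph{inner} variation equation in \eqref{vargr}, specialized to $f(X) = g(\Det(X))$, a pointwise algebraic constraint on $\Det(Du)$, and then to feed this back into Corollary \ref{thm:polysec}. First I would compute the energy--momentum tensor explicitly. Since $Df(X) = g'(\Det(X))\cof^T(X)$ and, by the identity $X\cof(X) = \cof(X)X = \Det(X)\Id$, one has $X^T\cof^T(X) = (\cof(X)X)^T = \Det(X)\Id$, it follows that
\[
Du^TDf(Du) - f(Du)\Id = \bigl(g'(\Det(Du))\Det(Du) - g(\Det(Du))\bigr)\Id.
\]
Because $u$ is Lipschitz, $\Det(Du) \in L^\infty(\Omega)$, so the scalar field $h(\Det(Du))$, where $h(t)\doteq g'(t)t - g(t)$, is bounded, and the inner variation equation $\dv(Du^TDf(Du) - f(Du)\Id) = 0$ reduces to $\nabla\bigl(h(\Det(Du))\bigr) = 0$ in the sense of distributions. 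As $\Omega$ is connected, this forces $h(\Det(Du)) \equiv c$ for some constant $c \in \R$.

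The next step is to analyze $h$ under \eqref{HP}: one has $h(0) = 0$ and $h'(t) = g''(t)t$, so $h$ is strictly decreasing on $(-\infty,0]$ and strictly increasing on $[0,\infty)$, with a strict global minimum at the origin; in particular $h(t) > 0$ for $t\neq 0$ and $h$ is continuous at $0$. If $c = 0$, then $h(\Det(Du)) = 0$ a.e.\ forces $\Det(Du) = 0$ a.e., so $\Det(Du)$ is (trivially) constant. If $c > 0$, continuity of $h$ at $0$ provides $\eta > 0$ such that $h(t) < c$ whenever $|t| < \eta$; since $h(\Det(Du)) = c$ a.e., this yields $\Det^2(Du) \ge \eta^2 > 0$ a.e. Moreover $u$, being stationary, also solves the Euler--Lagrange equation \eqref{EL} weakly. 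Hence the hypotheses of the \emph{moreover} part of Corollary \ref{thm:polysec} are met, with $\delta = \eta^2$, and we conclude that $\Det(Du)$ is constant on $\Omega$.

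Since the substantive analytic work has already been carried out in Theorem \ref{lem:const}/Corollary \ref{thm:polysec}, I do not expect a genuine obstacle. The two points requiring care are: (i) the algebraic reduction of $Du^TDf(Du) - f(Du)\Id$ to a multiple of the identity, which is what makes the inner variation equation so restrictive here; and (ii) the observation that the case $c > 0$ automatically produces the \emph{uniform} lower bound $\Det^2(Du) \ge \delta$ needed to invoke the global constancy statement (rather than merely the essentially-locally-constant one). The degenerate case $c = 0$ has to be treated separately, since $\Det(Du) \equiv 0$ falls outside the non-vanishing hypothesis \eqref{notzeroint} of Corollary \ref{thm:polysec}, but it is immediate.
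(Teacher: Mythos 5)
Your proposal is correct and follows essentially the same route as the paper: reduce the inner variation equation to the constancy of $h(\det(Du))$ with $h(t) = g'(t)t - g(t)$, use \eqref{HP} to analyze $h$ (treating $c=0$ separately), and in the nondegenerate case obtain a uniform lower bound $\det^2(Du) \ge \delta$ so that the ``moreover'' part of Corollary \ref{thm:polysec} applies. The only cosmetic difference is that you get the uniform bound from continuity of $h$ near $0$, while the paper notes that $h(t)=c\neq 0$ has at most two nonzero solutions $t_1,t_2$, which gives the same $\delta$.
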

\begin{proof}
We recall that stationarity for $u$ means that $u$ satisfies system \eqref{vargr}. The inner variation equations for $f(X) = g(\det(X))$ are
\begin{align*}
0 = \diver(Du^TDf(Du) - f(Du)\id) &= \diver(g'(\det(Du))Du^T\cof(Du)^T - g(\det(Du)\id)) \\
&= \diver((g'(\det(Du)\det(Du) - g(\det(Du)))\id),
\end{align*}
that has to be intended in the weak sense. In the previous chain of equalities we have used that $\cof(X)^T = \cof(X^T)$ and $\det(X^T) = \det(X)$, $\forall X \in \R^{2\times 2}$. For any $f \in L^1(\Omega,\R)$,
\[
\diver(f\id) = Df
\]
in the sense of distributions. Therefore the inner variations equations tell us that the function $$x\mapsto g'(\det(Du(x)))\det(Du)(x) - g(\det(Du)(x))$$ is constant, so there exists $C \in \R$ such that
\[
g'(\det(Du(x)))\det(Du)(x) - g(\det(Du)(x)) \equiv C.
\] Hypothesis \eqref{HP} moreover implies that for every $C\neq 0$, the equation $g'(t)t - g(t) = C$ has at most two solutions $t_1, t_2 \in \R\setminus\{0\}$, and if $C = 0$, then $t = 0$. To see this, we only need to take a derivative of $h(t)\doteq g'(t)t - g(t)$:
\[
h'(t) = g''(t)t +g'(t) - g'(t) = g''(t)t,
\]
and by the strict convexity of $g$ we have $h'(t) > 0$ for $t > 0$ and $h'(t) < 0$ for $t < 0$. To conclude that for $C = 0$ the only solution to $h(t) = 0$ is $t = 0$ we just need to observe that $h(0) = 0$, again by \eqref{HP}. Therefore, if $C = 0$, then $\det(Du(x)) = 0$ for a.e. $x \in \Omega$, and the proof is finished. Otherwise, $\det(Du(x)) = t_1\neq 0$ or $\det(Du(x)) = t_2\neq 0$ for a.e. $x \in \Omega$. We can therefore apply Theorem \ref{thm:polysec} to conclude that  $x\mapsto \det(Du(x))$ is constant in $\Omega$.
\end{proof}

We now consider $C^1$ and $W^{2,1}\cap W^{1,\infty}$ solutions to \eqref{EL}. These proofs are simpler and, at least in the $C^1$ case, probably known to experts, but we include the proofs for completeness.
\begin{corollary}\label{cor:C1}
Let $\Omega$ be open and connected and let $u\in C^1(\Omega,\R^n)$ be a critical point of the energy \eqref{EN}. Then, $x \mapsto \det(Du)(x)$ is constant.
\end{corollary}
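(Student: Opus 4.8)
The plan is to run the formal computation from the Introduction rigorously, exploiting the extra regularity $u \in C^1$. First I would observe that, since $u \in C^1(\Omega,\R^n)$ is a critical point, the Euler--Lagrange system \eqref{EL}, namely $\dv(g'(\det(Du))\cof^T(Du)) = 0$, holds in the weak sense; because $\dv(\cof^T(Du)) = 0$ for every Lipschitz $u$ (see \cite[Ch. 8, Thm. 2]{EVA}), this rewrites weakly as
\[
\int_\Omega g'(\det(Du))\langle \cof^T(Du), D\eta\rangle\, dx = 0 \quad \forall \eta \in C^\infty_c(\Omega,\R^n).
\]
Since $u \in C^1$, the map $x \mapsto g'(\det(Du(x)))$ is continuous, and $\cof^T(Du)$ is continuous as well; hence the distributional identity $\cof^T(Du)\,D(g'(\det(Du))) = 0$ (obtained formally by moving the divergence off of $\cof^T(Du)$) makes sense with $D(g'(\det(Du)))$ interpreted in the distributional sense, and it asserts that the continuous matrix field $\cof^T(Du)$ annihilates this distributional gradient.

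The main point is then to convert this into the pointwise statement that $g'(\det(Du))$ is locally constant. I would proceed on the open set $\Omega^+ \doteq \{x : \det(Du(x)) \neq 0\}$, which is open by continuity of $\det(Du)$, and on $\Omega^0 \doteq \{x: \det(Du(x)) = 0\}$. On $\Omega^+$, $\cof^T(Du)$ is invertible and continuous, so from $\cof^T(Du)\,D(g'(\det(Du))) = 0$ one gets $D(g'(\det(Du))) = 0$ as a distribution on each connected component of $\Omega^+$, hence $g'(\det(Du))$ is constant there; since $t \mapsto g'(t)$ is strictly increasing by \eqref{HP}, $\det(Du)$ itself is constant on each such component. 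On $\Omega^0$ one has $\det(Du) \equiv 0$ trivially. To glue these together into a global constant I would use that $g'(0) = 0$: on any component of $\Omega^+$ whose closure meets $\Omega^0$, the constant value of $\det(Du)$ must be $0$ by continuity, contradicting membership in $\Omega^+$ unless that component is empty; so no component of $\Omega^+$ touches $\Omega^0$. Combined with connectedness of $\Omega$, this forces either $\Omega = \Omega^+$ (a single component, $\det(Du)$ constant) or $\Omega = \Omega^0$ (where $\det(Du) \equiv 0$). In both cases $\det(Du)$ is constant.

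The step I expect to be the main (minor) obstacle is the rigorous justification that $\cof^T(Du)\,D(g'(\det(Du))) = 0$ as distributions \emph{and} that multiplying a distributional gradient by a continuous invertible matrix field preserves the vanishing: this requires care because $g'(\det(Du))$ is merely continuous, not $C^1$, so $D(g'(\det(Du)))$ is a priori only a distribution, and one cannot naively divide by $\cof^T(Du)$ pointwise. The clean way around this is to test \eqref{EL} against $\eta = \cof^T(Du)^{-T}\zeta$ for $\zeta \in C^\infty_c(\Omega^+,\R^n)$ — but $\cof^T(Du)^{-T}$ is only continuous, not $C^1$, so $\eta$ is not admissible; instead I would mollify, writing $g'(\det(Du)) = \lim_\varepsilon (g'(\det(Du)))_\varepsilon$ and checking that locally on $\Omega^+$ the gradient of the mollification is bounded in $L^1$ via the equation, or more simply invoke that a distribution $T$ on a connected open set with $A\,DT = 0$ for a continuous invertible matrix field $A$ satisfies $DT = 0$ (test $DT$ against $A^{-T}\psi$ approximated by smooth compactly supported fields — since $DT$ is a vector distribution and $A^{-T}$ is continuous, the pairing extends by density). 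Everything else is elementary, and once the locally-constant conclusion on $\Omega^+$ is in hand, the gluing via $g'(0)=0$ and connectedness is immediate.
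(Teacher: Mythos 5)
Your global structure (split $\Omega$ into $\{\det Du\neq 0\}$ and $\{\det Du=0\}$, get local constancy on the first set, then conclude by a clopen/connectedness argument) matches the paper's, and the gluing step is fine -- it only needs continuity of $\det(Du)$. The genuine gap is at the key local step: you derive constancy on $\Omega^+$ from the ``distributional identity'' $\cof^T(Du)\,D(g'(\det(Du)))=0$, but this identity is not available at $C^1$ regularity. First, the product of the matrix field $\cof^T(Du)$, which is merely continuous, with the vector-valued distribution $D(g'(\det(Du)))$, which is a genuine first-order distribution (the gradient of a function that is only continuous), is not defined; ``moving the divergence off of $\cof^T(Du)$'' is a Leibniz rule that is exactly what fails at this regularity -- this is why the paper flags in the introduction that the $C^2$ computation is unclear already for $C^1$ solutions, and why Corollary \ref{cor:W2} needs $u\in W^{2,1}$ and the product-rule result of \cite{DETREM} to make that manipulation legitimate. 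Second, your proposed repairs do not close this: the claim that one may pair $D(g'(\det(Du)))$ with $\cof^{T}(Du)^{-T}\psi$ ``by density, since $\cof^T(Du)^{-T}$ is continuous'' is false, because a distribution of order one is not continuous with respect to uniform convergence of test fields (its action involves derivatives of the test function), so uniform approximation by smooth fields gives no convergence of the pairings; and the mollification variant is left unsubstantiated -- the weak equation does not by itself yield an $L^1$ bound on $D\bigl((g'(\det Du))_\varepsilon\bigr)$ without precisely the commutator/Leibniz estimate you are trying to avoid. The remark in the paper (a non-constant $\beta\colon\Omega\to\{-1,1\}$ with $\curl(\beta Du)=0$) shows that no soft functional-analytic argument of this type can work without exploiting invertibility through an actual change of variables or extra differentiability.

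What the paper does instead on $\Omega^+$ is the step your argument is missing: since $u\in C^1$ and $\det Du(z)\neq 0$, the classical inverse function theorem makes $u$ a $C^1$ diffeomorphism near $z$, and one then runs the computation of Theorem \ref{thm:polygensec}: test the weak form of \eqref{EL} with $\eta=\Phi\circ u$, $\Phi$ compactly supported in the image ball, and use the change of variables to obtain $\int g'\bigl(\det Du(u^{-1}(y))\bigr)\,\diver\Phi(y)\,dy=0$ for all such $\Phi$, hence $g'(\det Du)$ is constant near $z$, and by strict monotonicity of $g'$ from \eqref{HP} so is $\det Du$. If you replace your distributional step by this local inversion argument, the rest of your proof (the clopen gluing) goes through and coincides with the paper's conclusion.
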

\begin{proof}
There are two cases. First, $\det(Du(x)) = 0$ for every $x \in \Omega$. In this case there is nothing to prove. Otherwise, there exists a point $x_0$ such that $\det(Du(x_0)) = C \neq 0$. Now it suffices to notice that the nonempty set
\[
E = \{x \in \Omega: \det(Du(x)) = C\}
\]
is both open and closed relatively to $\Omega$. Indeed, the fact that it is closed is immediate from the continuity of $x \mapsto \det(Du(x))$. Moreover, the set is open, indeed let $z \in E$. Then, $\det(Du(z)) \neq 0$, and hence the classical inverse function Theorem applies. It follows that, in a neighborhood $B$ of $z$, $x\mapsto \det(Du(x))$ is constant as can be seen following for instance the same lines of the proof of Theorem \ref{thm:polygensec}. As $\det(Du(z)) = C$, it follows $\det(Du(x)) = C$ for all $x \in B$, hence $B \subset E$. We deduce $E = \Omega$ by the connectedness of $\Omega$.
\end{proof}

\begin{corollary}\label{cor:W2}
Let $\Omega$ be open and connected. Let $u\in \Lip\cap W^{2,1}(\Omega,\R^n)$ be a critical point of the energy \eqref{EN}. Then, $x \mapsto \det(Du)(x)$ is constant.
\end{corollary}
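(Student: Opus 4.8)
The plan is to exploit the extra regularity to run the formal $C^2$ computation described in the introduction, but now justified for $u \in \Lip\cap W^{2,1}$. Since $u\in W^{2,1}$, the second distributional derivatives of $u$ are $L^1_{\loc}$ functions, and the Piola identity $\diver(\cof^T(Du)) = 0$ holds not merely weakly but pointwise a.e.\ (this is where $W^{2,1}$ is used; for merely Lipschitz $u$ the identity is only distributional). Therefore, from the Euler--Lagrange equation \eqref{EL} written as
\[
0 = \diver(g'(\det(Du))\cof^T(Du)) = \cof^T(Du)\,D\bigl(g'(\det(Du))\bigr) + g'(\det(Du))\,\diver(\cof^T(Du)),
\]
the second term vanishes a.e., so $\cof^T(Du)\,D\bigl(g'(\det(Du))\bigr) = 0$ a.e.\ in $\Omega$, where again $D\bigl(g'(\det(Du))\bigr) \in L^1_{\loc}$ by the chain rule applied to the $W^{2,1}$ map $x\mapsto \det(Du(x))$ composed with the $C^1$ function $g'$.

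Next I would split $\Omega$ according to whether $\det(Du)$ vanishes. On the open (up to null sets) set where $\det(Du) \neq 0$, the matrix $\cof^T(Du)$ is invertible, so $D\bigl(g'(\det(Du))\bigr) = 0$ a.e.\ there; since $g$ satisfies \eqref{HP}, $g''>0$, and by the chain rule $g''(\det(Du))\,D(\det(Du)) = 0$, hence $D(\det(Du)) = 0$ a.e.\ on this set. On the set where $\det(Du) = 0$, one has $D(\det(Du)) = 0$ a.e.\ as well, since $\det(Du)$ is a $W^{1,1}_{\loc}$ function and the gradient of a Sobolev function vanishes a.e.\ on any level set. Combining the two cases, $D(\det(Du)) = 0$ a.e.\ in $\Omega$, so the $W^{1,1}_{\loc}$ function $x\mapsto\det(Du(x))$ has a.e.-vanishing weak gradient on the connected open set $\Omega$, and is therefore constant. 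Alternatively, and perhaps more in the spirit of the rest of the subsection, one could invoke the result of \cite{DETREM} alluded to in the introduction to bypass the case analysis at the vanishing set.

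The main obstacle is the justification of the pointwise (rather than distributional) validity of the Piola identity and of the product/chain rule for $x\mapsto g'(\det(Du(x)))$ at the $W^{2,1}$ regularity level: one must check that $\det(Du) \in W^{1,1}_{\loc}$ with $D(\det(Du))$ given by the expected formula $\langle \cof^T(Du), D^2 u\rangle$-type expression (each entry a product of an $L^\infty$ cofactor and an $L^1$ second derivative, hence $L^1_{\loc}$), and that $g'\circ(\det(Du)) \in W^{1,1}_{\loc}$ with gradient $g''(\det(Du))\,D(\det(Du))$, which is standard since $g'\in C^1$ with $g''$ locally bounded and $\det(Du)\in L^\infty\cap W^{1,1}_{\loc}$. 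None of this is deep, but it is the only place where genuine care is needed; once these identities hold a.e., the argument is the elementary one sketched above, with no need for Theorems \ref{INV} or \ref{thm:quas}.
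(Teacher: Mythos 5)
Your argument is correct and, in its skeleton, is the same as the paper's: derive the pointwise identity $\cof^T(Du)\,D\bigl(g'(\det(Du))\bigr)=0$ a.e., use invertibility of the cofactor matrix on $\{\det(Du)\neq 0\}$, use the strong locality of Sobolev gradients on the level set $\{\det(Du)=0\}$ (the paper cites \cite[Theorem 4.4(iv)]{EVG} for exactly this), and conclude by connectedness. The only real difference is how the pointwise identity is justified. The paper does not expand the divergence by hand: it applies \cite[Theorem 1]{DETREM} with $v=g'(\det(Du))\in W^{1,1}\cap L^\infty$ and $\sigma$ the (divergence-free, bounded) rows of $\cof^T(Du)$, which yields $(Dv,\sigma)=0$ a.e. directly from the weak form of \eqref{EL}, with no need to discuss the Sobolev regularity of the cofactors. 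Your route instead uses the full strength of $u\in W^{2,1}\cap\Lip$ to place $\cof^T(Du)$ and $\det(Du)$ themselves in $W^{1,1}_{\loc}\cap L^\infty$ with the expected Leibniz/chain-rule formulas, after which the elementary product rule for bounded $W^{1,1}$ functions and the a.e. Piola identity give the same conclusion; this is legitimate, and you correctly flag these verifications as the only delicate point (note they would all fail for merely Lipschitz $u$, which is why the paper's other results need Theorems \ref{INV} and \ref{thm:quas}). Two cosmetic remarks: the set $\{\det(Du)\neq 0\}$ need not be open, even up to null sets, but your argument never uses openness, only the a.e. pointwise implication; and your final step (constancy of $\det(Du)$ itself) is equivalent to the paper's, which instead deduces constancy of $g'(\det(Du))$ and then uses the injectivity of $g'$ guaranteed by \eqref{HP}.
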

\begin{proof}
We invoke \cite[Theorem 1]{DETREM}, which tells us that if $\Omega\subset \R^n$ is an open set, $\sigma \in L^q(\Omega,\R^n)$, $\dv(\sigma) \in L^1(\Omega)$, $v \in W^{1,p}(\Omega)$ for $1\le p < n$ with
\begin{equation}\label{este}
\frac{1}{p} - \frac{1}{n} + \frac{1}{q} \le 1
\end{equation}
and the distribution $\diver(v\sigma)$ is represented by $d \in L^1$, then
\begin{equation}\label{eq:diver}
d(x) = (Dv(x),\sigma(x)) + v(x)\diver(\sigma)(x), \quad \text{for a.e. $x \in \Omega$.}
\end{equation}
In our case, $v(x) = g'(\det(Du(x))) \in W^{1,1}\cap L^\infty(\Omega)$, $\sigma = (\cof(Du)^T)_i$, where $i \in \{1,\dots, n\}$ and $ (\cof(Du)^T)_i$ is the $i$-th row of $\cof^T(Du)$, and $d(x) \equiv 0$. Notice that under our hypotheses \eqref{este} holds. Since the rows of $\cof^T(Du)$ are divergence free, \eqref{eq:diver} reads as
\[
\cof^T(Du)D(g'(\det(Du))) = 0
\]
in our case. Therefore, $D(g'(\det(Du))(x) = 0$ a.e. in the set
\[
E\doteq \{x: \det(Du) \neq 0\} \subset \Omega.
\]
If $E$ is of full measure in $\Omega$, then the proof is finished by the injectivity of $t \mapsto g'(t)$, that is a consequence of \eqref{HP}. By the so-called \emph{strong locality property} of the Sobolev derivatives, see \cite[Theorem 4.4(iv)]{EVG}, if $|E^c|\neq 0$, we anyway find that a.e. on $E^c$ $D(g'(\det(Du))) = 0$, since on $E^c$ the function $x\mapsto \det(Du(x))$ is constant. This implies that $D(g'(\det(Du))) = 0$ a.e. in $\Omega$ and finishes the proof.
\end{proof}

\section{Approximate Solutions}\label{s:approx}

In this section we study rigidity of approximate solutions for solutions of the system \eqref{vargr}.

\subsection{Young measures and differential inclusions}\label{subs:young}

First we recall the Fundamental Theorem on Young measures for equibounded sequences in $L^p$. We refer the reader to \cite[Section 3]{DMU} for a complete exposition on the subject. The results we report here are taken from that reference. In the following, $\mathcal{M}(\R^m)$ denotes the space of finite and positive measures on $\R^m$, $\mathcal{P}(\R^m)$ the space of probability measures on $\R^m$. Moreover, $\mathcal{L}^d$ denotes the Lebesgue measure on $\R^d$. Finally, if $\mu \in \mathcal{M}(\R^{N})$ and $f \in L^1(\R^N;\mu)$,
\[
\langle\mu,f\rangle \doteq \int_{\R^{N}}f(y)d\mu(y).
\]

\begin{theorem}[Fundamental Theorem on Young measures]
Let $E\subset \R^d$ be a Lebesgue measurable set with finite measure and let $p > 1$. Consider a sequence $z_j:E \to \R^N$ of measurable functions weakly converging in $L^p$ to some function $z$. Then, there exists a subsequence $z_{j_k}$ that \emph{generates the Young measure} $\nu$, where $\nu$ is a weak-* measurable map $\nu:E \to \mathcal{M}(\R^N)$ such that for $\mathcal{L}^d$-a.e. $x \in E$, $\nu_x \in \mathcal{P}(\R^N)$. The family $\nu = (\nu_x)_{x \in E}$ has the property that for every $f \in C(\R^N)$ such that
\[
|f(y)|\le C(1 + |y|^q), \text{ for } q < p,
\]
the following holds
\[
f(z_j) \rightharpoonup \bar{f}, \text{weakly in } L^{\frac{p}{q}}(E).
\]
In particular, the choice $f(y) = y,\; \forall y \in \R^N$ yields
\begin{equation}\label{exp}
z(x) = \langle\nu_x,f\rangle.
\end{equation}
\end{theorem}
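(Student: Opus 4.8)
The plan is to prove this by the classical weak-$*$ compactness argument for parametrized (Young) measures, the only genuinely delicate inputs being a functional-analytic identification of a dual space and a tightness estimate that uses $p>1$. First I would pass to the one-point compactification $\overline{\R^N}=\R^N\cup\{\infty\}$, which is compact and metrizable since $\R^N$ is locally compact, $\sigma$-compact and Hausdorff; then $C(\overline{\R^N})$ is a separable Banach space and, as $|E|<\infty$, so is $L^1(E;C(\overline{\R^N}))$. Using the classical description of the dual of a Bochner–Lebesgue space, $\big(L^1(E;C(\overline{\R^N}))\big)^*=L^\infty_{w*}(E;\mathcal{M}(\overline{\R^N}))$ with duality $\langle\mu,\Phi\rangle=\int_E\langle\mu_x,\Phi(x,\cdot)\rangle\,dx$. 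The maps $\nu^j_x:=\delta_{z_j(x)}$ are weak-$*$ measurable with $\|\nu^j_x\|_{\mathcal M}\equiv1$, hence lie in the unit ball of this dual, which is weak-$*$ sequentially compact because the predual is separable. Extracting, I obtain $z_{j_k}$ with $\nu^{j_k}\overset{*}{\rightharpoonup}\nu$, where $\nu$ is automatically weak-$*$ measurable; testing against $\Phi(x,y)=\eta(x)$, $\eta\in L^1(E)$, with the constant function $1$ gives $\langle\nu_x,1\rangle=1$ a.e., so $\nu_x\in\mathcal{P}(\overline{\R^N})$ a.e.

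Next comes the tightness step, which I expect to be the main obstacle to phrase cleanly since it is exactly where $p>1$ enters (for $p=1$ the Young measure could lose mass). Weak $L^p$-convergence gives $C_0:=\sup_k\|z_{j_k}\|_{L^p}<\infty$. Choosing cutoffs $\varphi_R\in C(\overline{\R^N})$ with $\mathbbm{1}_{\{|y|\ge 2R\}}\le\varphi_R\le\mathbbm{1}_{\{|y|\ge R\}}$ and $\varphi_R(\infty)=1$, for $\eta\in L^1(E)$, $\eta\ge0$, weak-$*$ convergence and Chebyshev give
\[
\int_E\eta\,\nu_x(\{\infty\})\,dx\le\int_E\eta\,\langle\nu_x,\varphi_R\rangle\,dx=\lim_k\int_E\eta\,\varphi_R(z_{j_k})\,dx\le\limsup_k\int_{\{|z_{j_k}|\ge R\}}\eta\,dx,
\]
and since $|\{|z_{j_k}|\ge R\}|\le C_0^pR^{-p}\to0$ uniformly in $k$, absolute continuity of $\int\eta\,dx$ forces the right-hand side to $0$ as $R\to\infty$. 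Hence $\nu_x(\{\infty\})=0$ a.e., i.e. $\nu_x\in\mathcal{P}(\R^N)$ for a.e. $x$.

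Then I would handle the growth case. For $f\in C(\R^N)$ with $|f|\le C(1+|y|^q)$, $q<p$, the estimate $\int_E|f(z_{j_k})|^{p/q}\le C'\int_E(1+|z_{j_k}|^p)\le C''$ shows $(f(z_{j_k}))$ is bounded in the reflexive space $L^{p/q}(E)$; testing the weak-$*$ convergence against the truncations $\min(|y|^q,M)\in C(\overline{\R^N})$ and letting $M\to\infty$ (monotone convergence, using $\nu_x(\{\infty\})=0$) gives $\int_E\langle\nu_x,|y|^q\rangle\,dx\le\liminf_k\int_E|z_{j_k}|^q\,dx<\infty$, so $\bar f(x):=\langle\nu_x,f\rangle$ is well defined a.e. and lies in $L^{p/q}(E)$. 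To identify the weak limit, fix $\eta\in L^\infty(E)$ and split $f=f_R+(f-f_R)$ with $f_R:=f\cdot(1-\varphi_R)\in C_c(\R^N)\subset C(\overline{\R^N})$: the $f_R$-term converges by weak-$*$ convergence to $\int_E\eta\langle\nu_x,f_R\rangle\,dx$, while Hölder with exponents $p/q$ and $p/(p-q)$ bounds the $(f-f_R)$-term by $C\|\eta\|_\infty\int_{\{|z_{j_k}|\ge R\}}|z_{j_k}|^q\,dx\le C\|\eta\|_\infty C_0^q(C_0^pR^{-p})^{(p-q)/p}$, which tends to $0$ as $R\to\infty$ uniformly in $k$ (this is where $q<p$ is used), with an analogous dominated-convergence bound for $\int_E\eta(\langle\nu_x,f\rangle-\langle\nu_x,f_R\rangle)\,dx$. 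A three-$\varepsilon$ argument gives $\int_E\eta f(z_{j_k})\,dx\to\int_E\eta\bar f\,dx$ for all $\eta\in L^\infty(E)$; since $L^\infty(E)$ is dense in $L^{(p/q)'}(E)$ (finite measure) and $(f(z_{j_k}))$ is $L^{p/q}$-bounded, this upgrades to $f(z_{j_k})\rightharpoonup\bar f$ in $L^{p/q}(E)$, with a limit independent of the sub-subsequence, so $f(z_{j_k})$ converges along the whole extracted subsequence.

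Finally, applying the last assertion to $f(y)=y$ (so $q=1<p$) gives $z_{j_k}\rightharpoonup\langle\nu_\cdot,\mathrm{id}\rangle$ in $L^p(E)$; but $z_{j_k}\rightharpoonup z$ as well, so uniqueness of weak limits yields $z(x)=\langle\nu_x,\mathrm{id}\rangle$, which is \eqref{exp}. The only points requiring care beyond routine estimates are the duality identification $\big(L^1(E;C(\overline{\R^N}))\big)^*=L^\infty_{w*}(E;\mathcal{M}(\overline{\R^N}))$ together with the measurability of the limit object, and the tightness step above; everything else is a combination of Banach–Alaoglu, Chebyshev, Hölder and dominated convergence.
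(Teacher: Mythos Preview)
The paper does not give its own proof of this statement: it is quoted as a background result, with the sentence ``We refer the reader to \cite[Section 3]{DMU} for a complete exposition on the subject. The results we report here are taken from that reference.'' So there is nothing to compare your argument against within the paper itself.

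That said, your proof is correct and is essentially the standard one (as in Ball's or M\"uller's expositions): embed via the one-point compactification, use the duality $\bigl(L^1(E;C(\overline{\R^N}))\bigr)^*\cong L^\infty_{w*}(E;\mathcal{M}(\overline{\R^N}))$ together with sequential Banach--Alaoglu, get tightness from the uniform $L^p$-bound and Chebyshev (this is exactly where $p>1$ enters), and then identify the weak $L^{p/q}$ limit by truncation and a three-$\varepsilon$ argument using $q<p$. The only cosmetic point is that the theorem as stated writes $f(z_j)\rightharpoonup\bar f$ while what you (correctly) prove is $f(z_{j_k})\rightharpoonup\bar f$ along the extracted subsequence; this is the intended meaning, and your remark that the limit $\bar f=\langle\nu_\cdot,f\rangle$ is determined by $\nu$ alone justifies passing from sub-subsequences back to the full extracted subsequence.
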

One of the most useful results concerning Young measures is \cite[Corollary 3.2]{DMU}:
\begin{corollary}\label{strongc}
Suppose that a sequence $z_j \in L^p(E;\R^N)$ converges weakly to $z \in L^p(E;\R^N)$ and generates the Young measure $\nu = (\nu_x)_{x \in E}$. Then,
\[
z_j \to z \text{ in $L^q,\forall 1\le q < p$ if and only if } \nu_x = \delta_{z(x)} \text{ for $\mathcal{L}^d$-a.e }x.
\]
\end{corollary}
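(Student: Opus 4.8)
The plan is to establish the two implications separately, in each direction moving information between the sequence and its Young measure by feeding well-chosen test functions into the Fundamental Theorem.

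For the implication ``$z_j\to z$ in $L^q$ for all $1\le q<p$ $\Rightarrow$ $\nu_x=\delta_{z(x)}$ a.e.'', I would proceed as follows. Since $p>1$, the hypothesis contains $z_j\to z$ in $L^1(E;\R^N)$, so along a subsequence (not relabelled) $z_j(x)\to z(x)$ for a.e.\ $x\in E$. Fix $f\in C_0(\R^N)$; its growth bound holds trivially (with any exponent in $(0,p)$), so $f(z_j)\rightharpoonup \bar f$ weakly in $L^p(E)$, where $\bar f(x)=\langle\nu_x,f\rangle$. On the other hand $f$ is bounded and continuous and $z_j\to z$ a.e.\ along the subsequence, so $f(z_j)\to f(z)$ strongly in $L^1(E)$ by dominated convergence, using $|E|<\infty$. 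Uniqueness of weak limits then forces $\langle\nu_x,f\rangle=f(z(x))$ for a.e.\ $x$. Running this over a countable family $\{f_k\}$ dense in $C_0(\R^N)$ and discarding the associated null sets, one finds a full-measure set of $x$ for which $\int f\,d\nu_x=f(z(x))$ for \emph{every} $f\in C_0(\R^N)$; since each $\nu_x$ is a probability measure, this forces $\nu_x=\delta_{z(x)}$.

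For the converse, assume $\nu_x=\delta_{z(x)}$ a.e.\ and fix $q$ with $1<q<p$ (the endpoint $q=1$ is recovered at the end). Applying the Fundamental Theorem to $f(y)=|y|^q$, which has growth exponent $q<p$, gives $|z_j|^q\rightharpoonup \bar f$ weakly in $L^{p/q}(E)$ with $\bar f(x)=\langle\nu_x,|\cdot|^q\rangle=|z(x)|^q$; testing against $\mathbbm 1_E$, which lies in $L^{(p/q)'}(E)$ because $|E|<\infty$, yields $\|z_j\|_{L^q(E)}\to\|z\|_{L^q(E)}$. At the same time $z_j\rightharpoonup z$ weakly in $L^q(E)$, since weak convergence in $L^p$ transfers to $L^q$ on a finite-measure set (one has $L^{q'}(E)\subset L^{p'}(E)$ as $q'>p'$). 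Weak convergence together with convergence of the norms in the uniformly convex space $L^q(E)$, $1<q<\infty$, yields $z_j\to z$ strongly in $L^q(E)$ by the Radon--Riesz property. Finally, for $q=1$ choose any $q'\in(1,p)$: strong convergence in $L^{q'}(E)$ implies strong convergence in $L^1(E)$ by H\"older's inequality and $|E|<\infty$. This covers all $1\le q<p$.

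The argument is essentially routine once the Fundamental Theorem is available, and I do not expect a genuine obstacle. The only points requiring care are the repeated use of $|E|<\infty$ --- to pass from weak convergence in $L^p$ to weak convergence in $L^q$, to use $\mathbbm 1_E$ as an admissible test function, and to interpolate from $L^{q'}$ down to $L^1$ --- and the fact that ``weak convergence plus convergence of norms implies strong convergence'' relies on the uniform convexity of $L^q$; this is precisely why the endpoint $q=1$ cannot be handled directly but must be reached by interpolation.
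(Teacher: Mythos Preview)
Your argument is correct in both directions. The paper does not actually prove this statement: it simply quotes it as \cite[Corollary 3.2]{DMU} and uses it as a black box, so there is no ``paper's own proof'' to compare against. Your proof is the standard one --- identify the Young measure via $C_0$ test functions in one direction, and use the Radon--Riesz property (weak convergence plus norm convergence) in the other --- and all the places where you flag the role of $|E|<\infty$ and of uniform convexity are exactly the points that need attention.
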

If the sequence $z_j = Du_j$, then we will call $\nu$ a gradient Young measure. Moreover, we will call a Young measure $\nu = (\nu_x)_{x \in \Omega}$ \emph{homogeneous} if $\nu_x$ does not depend on $x$. In particular, given a (gradient) Young measure $\nu = (\nu_x)_{x \in \Omega}$, for almost every $x \in \Omega$ there exists an homogeneous (gradient) Young measure $\mu = (\mu_y)_{y \in \Omega}$ in $\Omega$ such that $\mu_y = \nu_x$ for a.e. $y \in \Omega$, see \cite[Theorem 2.3]{KIP}.
\subsection{Approximate solutions}\label{APPSOL}
As recalled in the introduction, a differential inclusion is a relation of the kind
\[
Du(x) \in K,
\]
for a.e. $x \in \Omega$, where $K$ is a compact subset of $\R^{n\times m}$ and $u \in \Lip(\Omega,\R^n)$, $\Omega \subset \R^m$ open, bounded and convex. An important question is for which sets $K$ one has \emph{compactness for approximate solutions}, i.e. given a sequence $(u_n)_{n \in \N}$ with $\sup_n\|u_n\|_{W^{1,\infty}} \le C$ and
\begin{equation}\label{distzero}
\dist(Du_n,K) \to 0 \text{ in }L^1,
\end{equation}
is it true that $(Du_n)_{n \in \N}$ converges strongly in $L^1$ to some limit? \v Sver\'ak in \cite{SAP} gave conditions on $K\subset \R^{2\times 2}$ for this property to hold, i.e.
\[
c\det(X - Y) > 0, \text{ for some } c \in \R,\quad \forall X,Y \in K.
\]
We refer the reader to \cite{LASLOC,TR, KF} for related results and generalizations to higher dimensions. In order to study limits of equi-Lipschitz sequences of maps $u_n$ satisfying \eqref{distzero}, we consider the Young measure they generate, $\nu = (\nu_x)_{x \in \Omega}$. Condition \eqref{distzero} implies that
\[
\spt(\nu_x) \subset K, \text{ for a.e. }x \in \Omega.
\]
In \cite{KIP}, D. Kinderlehrer and P. Pedregal proved that gradient Young measures supported in a set $K$ are exactly those measures that satisfy Jensen's inequality with the class of quasiconvex function. We say that $f: \R^{n\times m} \to \R$ is quasiconvex if
\[
\fint_{\Omega}f(A + D\Phi(x))dx \ge f(A),\quad \forall A \in \R^{n\times m}, \Phi \in C^\infty_c(\Omega,\R^n).
\]
This yields duality between gradient Young measures and quasiconvex functions. From the collection of homogeneous Young measure supported in $K$, we can define
\[
K^{qc}\doteq\{X \in \R^{n\times m}: X = \langle \nu, \id \rangle, \nu \text{ homogeneous Young measures with } \spt(\nu)\subset K\}.
\]
By \cite[Theorem 4.10(iii)]{DMU}, one also finds the equivalent characterization:
 \begin{equation}\label{qc}
K^{qc}\doteq\{X \in \R^{n\times m}: f(X) \le 0, \forall \text{ quasiconvex }f \text{ s.t. }f|_{K}\le 0\}.
\end{equation}
Verifying that a function is quasiconvex is usually a hard problem, see for instance \cite{SQU}, therefore one introduces other classes of functions to get \emph{estimates} on the quasiconvex hull. In particular, one considers rank-one convex functions $f$, that are characterized by the property that
\[
t\mapsto f(A + tB) \text{ is convex }, \forall A,B \in \R^{n\times m}, \rank(B) = 1,
\]
and polyconvex functions, i.e. convex functions of the minors of $X$. Consequently, for a compact $K$, one defines $K^{rc}$ and $K^{pc}$ by separation using rank-one convex functions and polyconvex functions respectively, as in \eqref{qc}. One can prove that
\[
\text{$f$ polyconvex $\Rightarrow$ $f$ quasiconvex $\Rightarrow$ $f$ rank-one convex},
\]
therefore the following chain of inequality always holds:
\begin{equation}\label{incconv}
K\subseteq K^{rc} \subseteq K^{qc} \subseteq K^{pc}.
\end{equation}
By Corollary \ref{strongc}, it is clear that a set $K$ is compact for approximate solutions if and only if the set of homogeneous Young measures supported in $K$ consists only of Dirac deltas. In that case, it is immediate to see that $K^{qc} = K$. On the other hand, if $K^{rc}$ is sufficiently \emph{large}, one expects the existence of pathological solutions to the differential inclusion via convex integration, as in \cite{SMVS,LSP}. 
\subsection{On the differential inclusion associated to \eqref{vargr}}
Let us start by fixing the sets defining our differential inclusions. For $g$ satisfying $\eqref{HP}$ and $J = \left(\begin{array}{cc}
0& -1\\
1 & 0
\end{array}\right)$, we set:
\begin{align*}
K^g \doteq 
\left\{A \in \R^{4\times 2}:
A =
\left(
\begin{array}{c}
X\\
g'(\Det(X))X
\end{array}
\right)
\right\}
\end{align*}
and
\begin{align*}
K^g_{\stat} \doteq
\left\{A \in \R^{6\times 2}:
A =
\left(
\begin{array}{c}
X\\
g'(\Det(X))X\\
(g'(\Det(X))\Det(X) - g(\Det(X)))J
\end{array}
\right)
\right\}.
\end{align*}
The discussion of Subsection \ref{APPSOL} motivates Open Question \ref{open} and Kirchheim, M\"uller and \v Sver\'ak's result $(K^g)^{rc} = K^g$ for $g(x) = x^2$. If one considers approximates solutions $v_n$ to $K_{\out}^g$ or even $\KK$, one cannot expect strong convergence in $L^1$ of the gradients of $v_n$. Let us show this. Consider two matrices $A,B \in \R^{2\times 2}$ with
\[
\det(A) = \det(B) = 1, \quad \rank(A-B) = 1.
\]
Let also $C = \frac{A + B}{2}$. Notice that $\det(C) = 1$. Then, according to \cite[Theorem 3.5]{KIRK}, which is taken from \cite[Lemma 6.3]{MSCONS}, for all $n \in \N$ we can find a piecewise affine and Lipschitz map $u_n: B_1 \to \R^2$ with Lipschitz norm independent of $n$ such that
\begin{enumerate}
\item $\det(Du_n) = 1$ a.e. on $B_1 \subset \R^2$;\label{11}
\item $\|u_n(x) - Cx\|_{L^\infty(B_1,\R^2)} \le \frac{1}{n}$;\label{22}
\item $|\{x \in B_1: Du_n(x) = A\}| \ge \left(1-\frac{1}{n}\right)\frac{|B_1|}{2}$ and $|\{x \in B_1: Du_n(x) = B\}| \ge \left(1-\frac{1}{n}\right)\frac{|B_1|}{2}$.\label{33}
\end{enumerate}
By \eqref{22} and the equiboundedness of $(u_n)_n$, the sequence $(Du_n)_n$ converges weakly in $L^2$ to $C$, but it cannot converge strongly due to $\eqref{33}$. Now consider $v_n: B_1 \to \R^4$, $w_n: B_1 \to \R^6$ defined as
\[
v_n \doteq \left(\begin{array}{c} u_n\\ g'(1)u_n \end{array}\right), \quad w_n \doteq \left(\begin{array}{c} u_n\\ g'(1)u_n \\ (g'(1)-g(1))Jx \end{array}\right).
\]
By \eqref{11}, it is simple to see that $\dist(Dv_n,K^g) = \dist(Dw_n,\KK)  = 0$ a.e. in $B_1$, but  $(Dv_n)_n$ and $(Dw_n)_n$ do not converge strongly, since $(Du_n)_n$ does not.
\\
\\
In this situation, the best one can hope for is strong compactness of the sequence $(\det(Du_n))_n$, for the same reason that exact solutions to \eqref{EL} are only expected to have constant determinant, without further regularity properties. This is precisely what we are going to show for the differential inclusion defined by $\KK$. First, we establish a technical result concerning \emph{polyconvex} measures supported in $\KK$, Proposition \ref{poly}, which is inspired by \cite{SAP}. By \eqref{incconv}, this immediately yields
\[
\KK = (\KK)^{qc}.
\]
Next, we are going to use this proposition to deduce that a sequence of approximate solutions has strongly convergent jacobians in Theorem \ref{detstro}, i.e. Theorem \ref{intro:detstro} of the introduction.

\begin{prop}\label{poly}
Let $\mu \in \mathcal{P}(\R^{6\times 2})$ be a polyconvex measure, i.e.
\[
\langle\mu,\Det_{ij}(\cdot)\rangle = \Det_{ij}(\langle\mu,\id\rangle),\quad \forall 1\le i< j \le 6,
\]
supported on $K^g_{\stat}$.
Then, there exists $D \in \R$ such that
\begin{equation}\label{D}
\spt(\mu) \subset \left\{A \in \R^{6\times 2}:
A =
\left(
\begin{array}{c}
X\\
g'(\Det(X))X\\
(g'(\Det(X))\Det(X) - g(\Det(X)))J
\end{array}
\right), \Det(X) = D
\right\}.
\end{equation}
In particular,
\begin{equation}\label{bar}
M\doteq\langle\mu,\id\rangle \in \KK.
\end{equation}
\end{prop}
\begin{proof}
For a matrix $A \in \R^{6 \times 2}$ and $1\le i < j \le 6$, denote by $A^{ij}$ the $2\times 2$ submatrix obtained by considering only the $i$-th and $j$-th rows of $A$. Denote also $\det_{ij}(A) \doteq \det(A^{ij})$. Finally, let
\[
h(x) \doteq g'(x)x-g(x), \quad \forall x \in \R.
\]
We can start with the proof. The polyconvexity of $\mu$ yields, for every $i,j$,
\begin{equation}\label{first}
\int_{\R^{6\times 2}\times \R^{6\times 2}}\Det_{ij}(Y_1 - Y_2)d\mu(Y_1)\otimes d\mu(Y_2) = 0,
\end{equation}
see for instance the proof of \cite[Lemma 3]{SAP}. Since $\supp(\mu) \subset \KK$ we have for $\mu$-a.e. $Z \in \R^{6\times 2}$
\[
Z^{56} = \Det_{12}(Z)^2J.
\]
This implies that for $\mu\otimes \mu$ a.e. $(Y_1,Y_2)$,
\begin{equation}\label{second}
\Det_{56}(Y_1 - Y_2) = \Det_{56}(h(\Det_{12}(Y_1))J - h(\Det_{12}(Y_2))J) = (h(\Det_{12}(Y_1)) - h(\Det_{12}(Y_2)))^2.
\end{equation}
Combining $\eqref{first}$ and $\eqref{second}$, we obtain the existence of a number $e \in \R$ such that
\begin{equation}\label{Kg}
\spt(\mu) \subset K' \doteq \left\{A \in \R^{6\times 2}:
A =
\left(
\begin{array}{c}
X\\
g'(\Det(X))X\\
h(\Det(X))J
\end{array}
\right), h(\Det(X)) = e
\right\} \subset \KK.
\end{equation}
As in the proof of Corollary \ref{cor:polysec}, we have two cases. If $e = 0$ then, by assumption \eqref{HP} on $g$, we infer:
\[
\spt(\mu) \subset \left\{A \in \R^{6\times 2}:
A =
\left(
\begin{array}{c}
X\\
0\\
0
\end{array}
\right), \Det(X) = 0
\right\},
\]
and $\eqref{D}$-$\eqref{bar}$ readily follow. Otherwise, $e \neq 0$, which implies the existence of $e_1 < 0 < e_2$ such that
\begin{equation}\label{K'}
K' = \left\{A \in \R^{6\times 2}:
A = 
\left(
\begin{array}{c}
X\\
g'(\Det(X))X\\
h(\Det(X))J
\end{array}
\right), \Det(X) = e_1 \text{ or } \Det(X) = e_2
\right\}.
\end{equation}
Consider $M = \langle\mu,\id\rangle$, and denote with $M_i$, $1\le i\le 4$, the rows of $M$. Let
\[
I \doteq  \int_{\R^{4\times 2}}\Det_{14}(Y)d\mu(Y) =  \int_{\R^{4\times 2}}g'(\Det_{12}(Y))\Det_{12}(Y)d\mu(Y)
\]
and
\[
L \doteq \int_{\R^{4\times 2}}\Det_{34}(Y)d\mu(Y) =  \int_{\R^{4\times 2}}(g'(\Det_{12}(Y)))^2\Det_{12}(Y)d\mu(Y).
\]
By \eqref{HP}-\eqref{Kg}-\eqref{K'}, we have that the function $x\mapsto g'(x)x$ is nonnegative and zero only if $x$ is zero, and hence $I > 0$. Now, since $\spt(\mu) \subset \KK$ and $\mu$ is a polyconvex measure, we have
\begin{align}
&\langle\mu,\Det_{12}\rangle = \Det_{12}(M),\label{12}\\
&0 = \langle\mu,\Det_{13}\rangle = \Det_{13}(M),\label{13}\\
&0 = \langle\mu,\Det_{24}\rangle = \Det_{24}(M), \label{24}\\
&I  = \langle\mu,\Det_{14}\rangle = \Det_{14}(M), \label{14}\\
& -I = \langle\mu,\Det_{23}\rangle =\Det_{23}(M), \label{23}\\
&L = \langle\mu,\Det_{34}\rangle =\Det_{34}(M) \label{34}.
\end{align}
Since $I \neq 0$, from \eqref{14}-\eqref{23} we infer $M_1\neq 0$, $M_2 \neq 0$. From \eqref{13}-\eqref{24}, we thus find $\lambda, \mu \in \R$ such that
\[
M_3 = \lambda M_1 \text{ and } M_4 = \mu M_2.
\]
From \eqref{14}-\eqref{23} we further infer
\[
I = \mu \Det_{12}(M) \text{ and } I = \lambda \Det_{12}(M).
\]
As $I \neq 0$, we conclude $\lambda = \mu$, and thus by \eqref{34},
\[
L = \lambda^2\Det_{12}(M).
\]
Exploiting these relations and \eqref{K'}, we can rewrite \eqref{12}-\eqref{14}-\eqref{34} respectively as
\begin{align}
& te_1 + se_2 = \Det_{12}(M),\label{112}\\
& tg'(e_1)e_1 + sg'(e_2)e_2 = \lambda\Det_{12}(M), \label{114}\\
& t(g'(e_1))^2e_1 + s(g'(e_2))^2e_2 =\lambda^2\Det_{12}(M) \label{134},
\end{align}
if
\begin{equation}\label{ts}
t =\mu(\{A \in \R^{6\times 2}: \Det_{12}(A) = e_1\}) \text{ and } s = \mu(\{A \in \R^{6\times 2}: \Det_{12}(A) = e_2\}).
\end{equation}
Since $\mu \in \mathcal{P}(\R^{6 \times 2})$, $t,s \ge 0$ and $t + s = 1$. Let $a \doteq te_1, b \doteq se_2$ and $m \doteq \Det_{12}(M)$. Our goal is to show that $t = 0$ or $s = 0$, or equivalently that $a = 0$ or $b = 0$. To see this, we can take the square of \eqref{114} and use \eqref{134} to obtain:
\begin{align*}
(ag'(e_1) + g'(e_2)b)^2 &\overset{\eqref{114}}{=} \lambda^2m^2 \overset{\eqref{134}}{=}  am(g'(e_1))^2 + bm(g'(e_2))^2,
\end{align*}
from which it follows
\[
a^2(g'(e_1))^2 + b^2(g'(e_2))^2 + 2abg'(e_1)g'(e_2) = am(g'(e_1))^2 + bm(g'(e_2))^2,
\]
and hence
\[
a(a-m)(g'(e_1))^2 + b(b-m)(g'(e_2))^2 + 2abg'(e_1)g'(e_2) = 0.
\]
Using this notation, \eqref{112} reads as $a + b = m$. We can then use this relation in the previous equality to get:
\[
-ab(g'(e_1))^2 - ba(g'(e_2))^2 + 2abg'(e_1)g'(e_2) = 0.
\]
If, by contradiction, $a \neq 0$ and $b \neq 0$, then the latter can be simplified as
\[
0 = (g'(e_1)-g'(e_2))^2.
\]
Thus we find $g'(e_1) = g'(e_2)$ which is impossible by the fact that $e_2 < 0 < e_1$ and $g$ satisfies \eqref{HP}. We infer that $a = 0$ or $b = 0$, i.e. that $t = 0$ or $s = 0$. In particular, $m > 0$ if and only if $s = 0$ and $m < 0$ if and only if $t = 0$. In either case, by \eqref{ts} we deduce
\[
\spt(\mu) \subset \{A \in \R^{6\times 2}: \Det_{12}(A) = m\},
\]
which is precisely \eqref{D}. Using the latter and \eqref{112}-\eqref{114}-\eqref{134} we also find $M \in \KK$, that is \eqref{bar}.
\end{proof}

\begin{theorem}\label{detstro}
Let $\Omega \subset \R^2$ be open, bounded and convex. Let $u_n \in \Lip(\Omega,\R^6)$ be an equi-Lipschitz sequence with 
\begin{equation}\label{sconv}
\dist(Du_n(x),\KK) \to 0 \text{ in } L^1_{\loc}(\Omega).
\end{equation}
Suppose moreover that $u_n$ converges weakly-$*$ in $W^{1,\infty}$ to $u \in \Lip(\Omega,\R^6)$. Then, $\Det_{12}(Du_n)$ converges strongly in $L_{\loc}^1$ to $\Det_{12}(Du)$ and $Du \in K$.
\end{theorem}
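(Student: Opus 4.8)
The plan is to pass to the Young measure generated by a subsequence of $(Du_n)$, identify it with a polyconvex measure supported on $\KK$, and then invoke Proposition \ref{poly}, which will pin down the $\Det_{12}$-component of the Young measure to a single value — this being precisely strong convergence of $\Det_{12}(Du_n)$. Concretely, I would first extract a subsequence (not relabeled): since $(u_n)$ is equi-Lipschitz, $(Du_n)$ is equibounded in $L^\infty(\Omega)$, hence in every $L^p(\Omega)$ with $p<\infty$, and $u_n\rightharpoonup^* u$ in $W^{1,\infty}$ forces $Du_n\rightharpoonup Du$ in $L^p(\Omega)$. By the Fundamental Theorem on Young measures a further subsequence generates $\nu=(\nu_x)_{x\in\Omega}$ with $\langle\nu_x,\id\rangle=Du(x)$ for a.e.\ $x$. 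Testing the Lipschitz, hence linear-growth, function $\dist(\cdot,\KK)$ against this subsequence and comparing its weak limit $x\mapsto\langle\nu_x,\dist(\cdot,\KK)\rangle$ with the $L^1_{\loc}$-limit $0$ of $\dist(Du_n,\KK)$ from \eqref{sconv}, I obtain $\langle\nu_x,\dist(\cdot,\KK)\rangle=0$, i.e.\ $\spt\nu_x\subset\KK$, for a.e.\ $x$ (recall $\KK$ is compact, in particular closed).

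The key point is that, for a.e.\ fixed $x$, $\nu_x$ is a polyconvex measure supported on $\KK$ in the sense of Proposition \ref{poly}. Indeed, every $2\times 2$ minor $\Det_{ij}$ of a map from a planar domain is a null Lagrangian, so by weak continuity of minors $\Det_{ij}(Du_n)\rightharpoonup\Det_{ij}(Du)$, while on the other hand the Fundamental Theorem gives $\Det_{ij}(Du_n)\rightharpoonup\bigl(x\mapsto\langle\nu_x,\Det_{ij}\rangle\bigr)$; hence $\langle\nu_x,\Det_{ij}\rangle=\Det_{ij}(Du(x))=\Det_{ij}(\langle\nu_x,\id\rangle)$ for a.e.\ $x$ and all $1\le i<j\le 6$. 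Applying Proposition \ref{poly} with $\mu=\nu_x$ for a.e.\ $x$ then produces $D(x)\in\R$ with $\spt\nu_x\subset\{A:\Det_{12}(A)=D(x)\}$ and $\langle\nu_x,\id\rangle=Du(x)\in\KK$; in particular $D(x)=\Det_{12}(Du(x))$, and, this holding a.e., $Du(x)\in\KK$ for a.e.\ $x$, which is the asserted inclusion.

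For the strong convergence I would observe that the Young measure generated by $\Det_{12}(Du_n)$ is the pushforward $(\Det_{12})_{\#}\nu_x$, which by the previous paragraph equals $\delta_{\Det_{12}(Du(x))}$ for a.e.\ $x$. Since $(\Det_{12}(Du_n))$ is equibounded in $L^\infty(\Omega)$, hence in $L^p(\Omega)$ for all $p<\infty$, and has weak limit $\Det_{12}(Du)$, Corollary \ref{strongc} (with some $q=1<p$) yields $\Det_{12}(Du_n)\to\Det_{12}(Du)$ strongly in $L^1(\Omega)$, a fortiori in $L^1_{\loc}(\Omega)$, along the subsequence. Since neither the limit $\Det_{12}(Du)$ nor the inclusion $Du\in\KK$ depends on the chosen subsequence, the usual subsequence (Urysohn-type) argument upgrades these conclusions to the full sequence, finishing the proof.

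The step I expect to require the most care is the identification of $\nu_x$ as a polyconvex measure supported on $\KK$: this rests on the weak continuity of the $2\times 2$ minors precisely in the dimensional configuration $\R^2\to\R^6$, together with the routine but necessary localization needed to accommodate the $L^1_{\loc}$ hypothesis \eqref{sconv} and to pass from the weak limit of $\dist(Du_n,\KK)$ to a pointwise support statement. Once that identification is in place, Proposition \ref{poly} supplies all the rigidity, and the passage to strong convergence via Corollary \ref{strongc} and to the full sequence is essentially bookkeeping.
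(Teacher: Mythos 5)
Your proposal is correct and follows essentially the same route as the paper's proof: generate the Young measure $\nu=(\nu_x)$, identify each $\nu_x$ as a polyconvex measure supported in $\KK$ via weak continuity of the $2\times 2$ minors, apply Proposition \ref{poly} to pin $\Det_{12}$ down to $\Det_{12}(Du(x))$, and conclude strong convergence by Corollary \ref{strongc} together with the standard subsequence argument. The only small slip is calling $\KK$ compact---it is unbounded---but your argument only uses that it is closed (and the supports of the $\nu_x$ are in any case bounded by the equi-Lipschitz assumption), so nothing breaks.
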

\begin{proof}
We can equivalently show that every subsequence of $(u_n)_n$ admits a further subsequence whose jacobians strongly converge to $x \mapsto \Det(Du)(x)$. Thus, we can assume, up to passing to a non-relabeled subsequence, that $(Du_n)_n$ generates the gradient Young measure $\nu=(\nu_x)_{x \in \Omega}$. Since determinants of minors of $(Du_n)_n$ are weakly convergent, see \cite[Theorem 2.3]{DMU}, we have that $\nu_x$ is a polyconvex probability measure for a.e. $x$. Indeed for all $\varphi \in C^\infty_c(\Omega)$, and $1\le i< j \le 6$, using the notation of the previous proof, we have:
\[
\int_{\Omega}\varphi(x)\int_{\R^{6\times 2}}\Det_{ij}(A)d\nu_x(A)dx = \lim_n\int_{\Omega}\varphi(x)\Det_{ij}(Du_n)(x)dx = \int_{\Omega}\varphi(x)\Det_{ij}(Du)(x)dx,
\]
from which we infer, for a.e. $x \in \Omega$
\begin{equation}\label{nulllag}
\int_{\R^{6\times 2}}\Det_{ij}(A)d\nu_x(A) = \Det_{ij}(Du)(x) \overset{\eqref{exp}}{=} \Det_{ij}\left(\int_{\R^{6\times 2}}Ad\nu_x(A)\right),
\end{equation}
which precisely shows that $\nu_x$ is a polyconvex measure for a.e. $x \in \Omega$. Assumption \eqref{sconv} implies that $\nu_x$ is supported in $\KK$ for a.e. $x \in \Omega$. We can therefore apply the previous Proposition to obtain the existence of a number $D = D(x)$ such that
\[
\spt(\nu_x) \subset \left\{A \in \R^{6\times 2}:
A =
\left(
\begin{array}{c}
X\\
g'(\Det(X))X\\
(g'(\Det(X))\Det(X) - g(\Det(X)))J
\end{array}
\right), \Det(X) = D(x)
\right\}.
\]
Finally, \eqref{nulllag} implies $D(x) = \Det_{12}(Du(x))$ for a.e. $x$. In particular, the Young measure generated by the sequence $(\Det_{12}(Du_n))_n$ is a Dirac's delta at $\Det_{12}(Du)$ at a.e. $x \in \Omega$. By Corollary \ref{strongc}, this implies the a.e. convergence of the sequence $(\Det_{12}(Du_n))_n$. The fact that $Du \in \KK$ is a consequence of \eqref{bar}.
\end{proof}

\bibliographystyle{plain}
\bibliography{Detsquare}
\end{document}